\newlength{\spacing}
\newtheorem{theorem}{Theorem}[section]
\newtheorem{lemma}{Lemma}[section]
\newtheorem{corollary}{Corollary}[section]
\newtheorem{remark}{Remark}[section]
\numberwithin{equation}{section}
\renewenvironment{proof} {{\bf Proof.} \ignorespaces}
{\par\medskip}
\title{WENO interpolations and reconstructions using data bounded polynomial approximation}
\author[1]{Sabana Parvin }
\author[2]{Ritesh Kumar Dubey }
\affil[*]{Research Institute \& Department of Mathematics, SRM Institute of Science and Technology, Chennai, India}
\affil[1]{\textit {sabanaps@srmist.edu.in }}
\affil[2]{\textit {riteshkd@srmist.edu.in}}
\date{}
\begin{document}

\maketitle

\begin{abstract} 
\par This work characterizes the structure of third and forth order WENO weights by deducing data bounded condition on third order polynomial approximations. Using these conditions, non-linear weights are defined for third and fourth order data bounded weighted essentially non-oscillatory (WENO) approximations. Computational results show that data bounded WENO approximations for smooth functions achieve required accuracy and do not exhibit overshoot or undershoot for functions with discontinuities and extrema. Further with suitable weights, high order data-bounded WENO approximations are proposed for WENO schemes. 
\end{abstract} 

{\bf Keywords:} Data-bounded polynomial approximations, Non-linear weights, WENO interpolations-reconstructions, WENO schemes, Hyperbolic conservation laws.\\
{\bf AMS subject classifications}. 65M06, 35L65

\section{ Introduction}
Solution of partial differential equations (PDEs) like hyperbolic conservation laws and convection dominated problems admits strong irregular profiles (large jumps,discontinuities) along with complicated smooth structures which make numerical treatment of such solution challenging. It is worth mentioning that the visualization of the solution of a PDE model is an essential aid to comprehend the physical problem. The goodness of any numerical scheme for these PDEs relies on the accuracy and non-oscillatory nature of approximated numerical fluxes at cell interfaces. Also for numerical stability, scheme should be data-bounded to ensure the bounded growth of the numerical solutions \cite{laney}. This consequently depends on boundededness of the  polynomial approximation used in underlying numerical scheme. Thus high-order well-behaved polynomial approximation techniques that can interpolate both smooth and discontinuous function data without introducing “Gibbs-like” oscillations are much required. In other words, a high order polynomial approximations should respect the physical properties like boundedness and shape of given data \cite{berzins2007adaptive}. In fact, data bounded polynomial approximations which are bounded by the minimum and maximum data values may also help to preserve positivity in the solution \cite{berzins2010nonlinear}. {\it Throughout this text, until stated, the term "data bounded or bounded polynomial approximations" represents that it is bounded by the minimum and maximum of data values used to construct underlying polynomial}. 

\par Among high order polynomial approximation techniques essentially non-oscillatory (ENO) and the weighted ENO (WENO) approximations achieved phenomenal popularity due to their ability to approximate the discrete data corresponding to strong discontinuities in an essentially non-oscillatory manner while retaining high-order accuracy in smooth data regions. 
Though ENO or WENO procedures are generic function approximation techniques, they are mainly developed in the context of numerical schemes for hyperbolic conservation laws (HCL). 

\par Harten et. al \cite{harten1987uniformly} first proposed Essentially non-oscillatory(ENO) schemes which later got efficiently implemented by Shu and Osher in \cite{shu1988efficient, shu1989efficient}. ENO procedure is based on the idea of smoothest stencil selection out of many candidate stencil. In \cite{liu1994weighted}, Liu et al., proposed the WENO procedure which uses convex combination of all the candidate stencils unlike choosing the smoothest one in the ENO techniques. Later, Jiang and Shu in \cite{jiang1996efficient} introduced the most-celebrated finite-difference WENO framework popularly named as WENO-JS scheme by modifying the smoothness measurement and extended the scheme up to fifth-order accuracy. In \cite{henrick2005mapped} Henrick et al.,  proposed a mapping function to improve optimal order of accuracy at critical points as WENO-JS schemes fail to achieve the formal order of accuracy at critical points. Borges et al., \cite{borges2008improved} developed new WENO weights by using global smoothness measurement for fifth-order WENO scheme and named it as WENO-Z scheme. Further many versions of WENO schemes are developed in \cite{castro2011high,ha2013improved,rathan2018,DON2013347,xiaoshuai2015high,biswas2020eno,fan2014new,wu2016new,xu2018improved, shu1990numerical}. General information about WENO framework can be found in the surveys by Shu \cite{shu2009high} and Zhang et al. \cite{abgrall2016handbook}. Generally, WENO schemes are based upon WENO reconstruction procedure which generates a piecewise polynomial approximations of a function from given set of its cell averages. WENO reconstruction using polynomials from cell averages is equivalent to the interpolation of point values of the primitive polynomial \cite{shu2009high}. Therefore, WENO methods (algorithms) can also be formulated as an interpolation technique \cite{shu2009high}. In WENO methods the use of high order polynomials are attractive due to their potential of yielding high accurate approximation of cell interface values.

\par Despite of the substantial body of work and developments on WENO methods, there are still challenges associated with the stability analysis of these methods. Some of the work carried out in terms of stabilities of WENO schemes are  \cite{yamaleev2009high,balsara2000monotonicity,zhang2011maximum} and analysis of a peculiar sign stability property \cite{fjordholm2013eno} is done for third  order WENO reconstruction in \cite{fjordholm2016sign,biswas2018low}. However, to the best of authors knowledge the stability of WENO schemes in terms of boundedness of the solution or associated WENO approximations are not carried out so far.

\par {\it The main aim of this work is to establish conditions on data-boundedness of three point polynomial approximation and further construct high order data-bounded WENO interpolations and WENO reconstructions at cell interfaces. }

The rest of the paper is organized as follows: In Section \ref{sec-2} the data-boundedness conditions for a three-point polynomial approximations is established. This is done by analyzing the conditions on non-linear weights such that the three point polynomial approximation is bounded. These conditions are provided in terms of smoothness parameters which are ratio of consecutive gradients. Section \ref{sec-3} provides suitable non-negative weights for data-bounded WENO approximations along with computational verification for the accuracy and data-boundedness of the WENO approximations. Section \ref{sec-4} briefly discuss the application of data-bounded WENO approximations to construct WENO schemes for hyperbolic conservation laws  following author's work \cite{parvin2021new}, whereas other possible uses are mentioned in \ref{sec-5}. Section \ref{sec-6} contains the concluding remarks.
 \begin{remark}
 	It is a rudimentary proof due to Harten (in \cite{zhang2011maximum}) that a {\bf data bounded} approximation of the solution of hyperbolic PDE's in the sense of associated maximum principle can be at most second order accurate. Note that,
 	$$\mbox{Data bounded solution} \Rightarrow \mbox{Bounded solution,  but converse is not true.}$$ 
 	We {\bf emphasize} that, {\bf data bounded WENO reconstructions} proposed in this work when applied to WENO schemes for hyperbolic conservation laws eventually ensure that the computed solution of underlying PDE is  {\bf bounded}. It does not guarantee for solution to be {\bf data bounded} in the sense of maximum principle. 
 	Thus, following Theorem \ref{first-theorem} about bounded polynomial approximation or statements like third order data bounded WENO reconstruction do not contradict with the restrictive proof in \cite{zhang2011maximum} on the accuracy of data bounded solution of hyperbolic PDE.
 \end{remark}
  
\section{Data-bounded three-point polynomial approximation}\label{sec-2}
Let the set of  discrete data values $\{v_j\}_{j=i-1}^{j=i+1}$ be given for a bounded function $v(x)$ at evenly distributed spatial points $\displaystyle \{x_j\}_{j=1-1}^{j=i+1}$ with fixed grid spacing $\Delta x$. These data values can be either point values $v_j= v(x_j)$ or cell-average values $\overline v_j $ of $v(x)$ in cell $I_j =[x_{j-\frac{1}{2}}, x_{j+\frac{1}{2}}]$ i.e.,  $\displaystyle \overline v_j =\frac{1}{\Delta x}\int_{x_{j-\frac{1}{2}}}^{x_{j+\frac{1}{2}}} v(x) dx$.  Consider the following three-point polynomial approximation, 
\begin{equation}\label{poly-approx}
\hat{v}(x)= \sum_{p=0}^{1} \alpha_p \hat{v}^{p}(x)=\alpha_0 \hat{v}^{0}(x)+\alpha_1 \hat{v}^{1}(x)
\end{equation}
where weights $\alpha_i \in \mathbb{R}, i =0,1$ are such that  $\alpha_1=1-\alpha_0$ and  $\hat{v}^p$ are the following linear interpolating polynomial using data $\{v_{i+p-1},v_{i+p}\}, \; p=0,1$ in stencils $S_p(i):=\{x_{i+p-1},x_{i+p}\}$
\begin{equation}\label{approxes}
\begin{aligned}
	\hat{v}^{0}(x)&=v_{i-1}+\frac{v_{i}-v_{i-1}}{\Delta x}(x-x_{i-1})\\
	\hat{v}^{1}(x)&=v_{i}+\frac{v_{i+1}-v_{i}}{\Delta x}(x-x_{i})
\end{aligned}
\end{equation}
Define the quantities $L_i^{+}=\frac{1}{1-r_{i}^{+}},~L_i^{-}=\frac{1}{1-r_{i}^{-}}$, where $r_i^{\pm}$ is the smoothness parameter defined as
\begin{equation}\label{SP}
r_i^{\pm}=\frac{\Delta_{\mp}v_i}{\Delta_{\pm}v_i} \in \mathbb{R}\cup\{\pm \infty\}
\end{equation}
Also let  $m_i=\min\{v_{i-1},v_{i},v_{i+1}\}\; \mbox{and}\; M_i=\max\{v_{i-1},v_{i},v_{i+1}\}$ then boundedness conditions of the polynomial approximation \eqref{poly-approx} in terms of weight $\alpha_0$ and smoothness parameter $r_i$, are given by the following theorem.
\begin{theorem}\label{first-theorem} 
The polynomial approximation $\hat{v}(x), x \in [x_{i-1},x_{i+1}]$ \eqref{poly-approx} is {\bf data bounded} i.e., $m_i\leq \hat{v}(x)\leq M_i$ provided
\begin{itemize}
	\item [(a)] $K^{\pm}_1\le \alpha_0(x-x_i) \le K^{\pm}_2$, for $r_{i}^{\pm}\geq1$
	\item [(b)] $K^{\pm}_2\le \alpha_0(x-x_i) \le K^{\pm}_1$, for $r_{i}^{\pm}\in [0,1)$
	\item [(c)] $K^{\pm}_2\le \alpha_0(x-x_i) \le K^{\pm}_3$, for $r_{i}^{\pm}\in [-1,0]$
	\item [(d)] $K^{\pm}_1\le \alpha_0(x-x_i) \le K^{\pm}_3$, for $r_{i}^{\pm}\le -1$
\end{itemize}
where the bounds for data-bounded region are \\
$\begin{array}{llrl}
	K_1^+ =& L_i^{+} \left(r_{i}^{+} \Delta x+(x-x_i)\right), & K_1^{-} =& 	L_i^{-}\left(r_{i}^{-}\left(\Delta x-(x-x_i)\right)\right) \\
	K_2^{+} =& L_i^{+}(x-x_{i+1}), & K_2^{-}=&-L_{i}^{-}\left(\Delta x+(x-x_i)r_i^{-}\right) 
	\\
	K_3^{+}=& L_i^{+}(x-x_{i}), & K_3^{-} =&-L_{i}^{-}\left((x-x_i)r_i^{-}\right) 
\end{array}$
\\  
\end{theorem}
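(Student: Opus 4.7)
The plan is to reduce the data-boundedness $m_i \leq \hat{v}(x) \leq M_i$ to a one-parameter inequality in the scalar $A := \alpha_0(x - x_i)$, which is exactly the quantity bounded in the theorem. Setting $a := \Delta_- v_i = v_i - v_{i-1}$ and $b := \Delta_+ v_i = v_{i+1} - v_i$, and substituting $v_{i-1} = v_i - a$ into the definition of $\hat{v}^0$, the convex combination in \eqref{poly-approx} collapses after straightforward algebra to the single identity
$$\hat{v}(x) - v_i \;=\; \frac{x - x_i}{\Delta x}\bigl[\alpha_0\, a + (1-\alpha_0)\, b\bigr] \;=\; \frac{(x - x_i)\, b + A\,(a - b)}{\Delta x}.$$
This is the only identity required from the definition of $\hat{v}$; everything else follows by case-by-case bookkeeping.

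With this identity in hand, the boundedness condition becomes the linear sandwich
$$(m_i - v_i)\Delta x - (x - x_i)\, b \;\leq\; A\,(a - b) \;\leq\; (M_i - v_i)\Delta x - (x - x_i)\, b.$$
To extract the four regimes of the theorem I would classify by the sign and magnitude of $r_i^+ = a/b$. The sign of $r_i^+$ separates monotone data ($r_i^+ > 0$, with $\{m_i, M_i\} = \{v_{i-1}, v_{i+1}\}$) from the extremum case ($r_i^+ \leq 0$, with $v_i$ equal to either $m_i$ or $M_i$), and the comparison of $|r_i^+|$ with $1$ then determines which of $v_{i-1}$ or $v_{i+1}$ supplies the remaining extreme value. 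This yields exactly the four regimes (a)-(d). Within each regime there are two sign-subcases (for instance, $a, b$ both positive versus both negative in case (a)), and one checks that the resulting values of $m_i - v_i$ and $M_i - v_i$, though they swap between subcases, produce identical bounds on $A$ after the next step.

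To finish, I would divide the sandwich through by $(a - b)$ and substitute $a = r_i^+ b$. The factor $b$ cancels between numerator and denominator, leaving each bound as a constant multiple of $L_i^+ = 1/(1 - r_i^+)$ times a linear expression in $x - x_i$, and a direct match identifies these with $K_1^+$, $K_2^+$, $K_3^+$ exactly as stated. The sign of $(a - b)$ controls whether the sandwich flips when we divide; combined with the sign change of $L_i^+$ across $r_i^+ = 1$, this is the mechanism that causes $K_1^+$ and $K_2^+$ to exchange roles between cases (a) and (b), and $K_1^+$ and $K_3^+$ to do so between cases (c) and (d). The "$-$" version of each bound follows symmetrically by dividing instead by $(b - a)$ and substituting $b = r_i^- a$; since $r_i^+ r_i^- = 1$ whenever both are finite and nonzero, a short algebraic check confirms that the "$+$" and "$-$" formulations impose identical constraints on $A$.

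The main obstacle is therefore organizational rather than conceptual: four regimes times two sign-subcases each, a sign-dependent inequality flip when dividing by $(a - b)$, and the need to track the "$+$" and "$-$" parametrizations in parallel. Once the compact identity of the first step is in place, the remainder of the proof reduces to routine linear algebra.
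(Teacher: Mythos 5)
Your proposal is correct and follows essentially the same route as the paper: your single identity for $\hat v(x)-v_i$ is precisely the paper's rearrangement \eqref{form3} (its forms \eqref{form1} and \eqref{form2} are the same identity re-anchored at $v_{i-1}$ and $v_{i+1}$), and your subsequent case split on the sign of $r_i^{+}$ and its comparison with $\pm 1$, together with the inequality flip when dividing by $1-r_i^{+}$, is exactly the paper's argument. The only difference is organizational: you carry one two-sided sandwich with case-dependent values of $m_i-v_i$ and $M_i-v_i$, whereas the paper extracts the one-sided bounds from its three separate rearranged forms.
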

\begin{proof}
Consider the case when smoothness parameter is defined as $r_i^{+}$. The proof for $r_i^{-}$ follows analogously.\\
{\bf Case I: Monotone Data $r_i^+\ge 0$:}  By rearranging the terms, polynomial approximation \eqref{poly-approx} can be written in the following forms,
\begin{subequations}
\begin{equation}\label{form1}
	\hat{v}(x)=v_{i-1}+(1-\alpha_0)\Delta_{-}v_{i}+\frac{\alpha_0}{\Delta x}(x-x_{i-1})\Delta_{-}v_{i}+\frac{\alpha_1}{\Delta x}(x-x_{i})\Delta_{+}v_{i}
\end{equation}
\begin{equation}\label{form2}
	\hat{v}(x)=v_{i+1}-\alpha_0\left(1-\frac{x-x_{i-1}}{\Delta x}\right)\Delta_{-}v_{i}-\left(1-\frac{\alpha_1}{\Delta x}(x-x_{i})\right)\Delta_{+}v_{i}
\end{equation}
\begin{equation}\label{form3}
	\hat{v}(x)=v_{i}-\alpha_0\left(1-\frac{x-x_{i-1}}{\Delta x}\right)\Delta_{-}v_{i}+\frac{\alpha_1}{\Delta x}(x-x_{i})\Delta_{+}v_{i}
\end{equation}
\end{subequations}
Let data be monotonically increasing i.e., $m_i=v_{i-1}\le v_{i}\le v_{i+1}=M_i$ then it follows from \eqref{form1},
\begin{eqnarray}\nonumber
\hat{v}(x)&\ge& v_{i-1},~\text{if} ~(1-\alpha_0)\Delta_{-}v_{i}+\frac{\alpha_0}{h}\Delta_{-}v_{i}(x-x_{i-1})+\frac{\alpha_1}{h}\Delta_{+}v_{i}(x-x_{i})\ge 0
\end{eqnarray}
or
\begin{equation}\label{comb1}
\hat{v}(x)\geq m_i ~~~~\text{if}~\alpha_0(x-x_i)(1-r_{i}^{+})\le r_{i}^{+}\Delta x+(x-x_i)
\end{equation}
Similarly from \eqref{form2}
\begin{eqnarray}\label{comb2}
\hat{v}(x)&\le& v_{i+1}~(=M_{i}),~\text{if}~ \alpha_0(x-x_i)(1-r_{i}^{+})\ge x-x_{i+1}
\end{eqnarray}
together with \eqref{comb1}  and \eqref{comb2};
\begin{equation}\label{DB1}
m_{i}\le \hat{v}(x)\le M_{i},\; \mbox{provided}\;(x-x_{i+1})\le \alpha_0(x-x_i)(1-r_{i}^{+}) \le \left(r_{i}^{+}\Delta x+(x-x_i)\right).
\end{equation}
Conditions (a) and (b) of theorem  follows from the conditional compound inequality in \eqref{DB1}. Similarly these conditions for monotonically decreasing data i.e., $M_i = u_{i-1}\geq u_i \geq u_{i+1} = m_i$ can be obtained.\\ 
{\bf Case 2: Non Monotone data $r_{i}^{+}\le 0$:} It implies that given data consists extrema. Here calculations are given only for the case when $v_i$ is minima as similar calculations follows for the case $v_i$ is maxima. In case of minima ,    
$$v_i\le\min(v_{i-1},v_{i+1}) :\Rightarrow  m_{i}=v_{i}, M_{i}=\max(v_{i+1}, v_{i-1}),\mbox{and}\;\Delta_{-}v_{i}\le0,~\Delta_{+}v_{i}\ge0.$$
Following sub-cases can occur 
\begin{itemize}
\item[({\bf i})] $v_{i-1}\leq v_{i+1} \Rightarrow M_{i}=v_{i+1}~\mbox{and}~v_{i}-v_{i-1}\geq v_{i}-v_{i+1}\Rightarrow r_{i}^{+}\geq -1.$
Therefore $r_{i}^{+}\in[-1,0]$ and $1\le (1-r_{i}^{+})\leq2.$ It follows from the equation \eqref{form3}
\begin{eqnarray}\nonumber
	\hat{v}(x)&\ge & v_{i},~\text{if} ~\alpha_0\left(1-\frac{x-x_{i-1}}{\Delta x}\right)\Delta_{-}v_{i}-\frac{\alpha_1}{\Delta x}(x-x_{i})\Delta_{+}v_{i}\le 0\\\label{comb3}
	&\ge& m_i ~\text{if}~ \alpha_0(x-x_i)\le \frac{x-x_i}{1-r_{i}^{+}}
\end{eqnarray}
Similarly using \eqref{form2}
\begin{eqnarray}\label{comb4}
	\hat{v}(x)&\le& v_{i+1}~(=M_{i}),~\text{if}~ \alpha_0(x-x_i)\ge \frac{x-x_{i+1}}{1-r_{i}^{+}}
\end{eqnarray}
Equation \eqref{comb3} and \eqref{comb4} establish condition (c) of the theorem as,
\begin{equation}\nonumber
	m_{i}=v_{i}\le \hat{v}(x)\le v_{i+1}=M_{i},\; \mbox{provided}\;\frac{x-x_{i+1}}{1-r_{i}^{+}}\le \alpha_0(x-x_i) \le  \frac{x-x_i}{1-r_{i}^{+}}.
\end{equation}
\item [({\bf ii})]  $v_{i-1}\geq v_{i+1} \Rightarrow M_{i}=v_{i-1}$ and $ v_{i}-v_{i-1}<v_{i}-v_{i+1}\Rightarrow r_{i}^{+}\leq -1$ and $(1-r_{i}^{+})\geq 2.$
Now from \eqref{form1} we get, 
\begin{eqnarray}\label{comb5}
	\hat{v}(x)&\le& v_{i-1}~(=M_{i}),~\text{if}~ \alpha_0(x-x_i)\le \frac{r_{i}^{+}\Delta x+(x-x_i)}{(1-r_{i}^{+})}
\end{eqnarray}
Equation \eqref{comb3} and \eqref{comb5} implies condition (d) of the theroem i.e., 
\begin{equation}\nonumber
	m_{i}=v_{i}\le \hat{v}(x)\le v_{i-1}=M_{i},\; \mbox{provided}\; \frac{r_{i}^{+}\Delta x+(x-x_i)}{(1-r_{i}^{+})}\le \alpha_0(x-x_i) \le  \frac{x-x_i}{1-r_{i}^{+}} .
\end{equation}

\end{itemize}

\end{proof}
\subsection{Conditions for data-bounded approximations at cell-interfaces:}\label{subsec-2.1}
Note that the conservative numerical approximation of PDE's requires approximations at cell interfaces $x_{i\pm\frac{1}{2}}$. 
Following lemmas follows from Theorem \eqref{first-theorem}
\subsubsection{At cell interface $\mathbf{x_{i+\frac{1}{2}}}$:}
The polynomial approximation \eqref{poly-approx} at $x_{i+\frac{1}{2}}$ can be written as
\begin{equation}\label{reconst1}
\hat v_{i+\frac{1}{2}}= \beta_0 \hat v^{0}_{i+\frac{1}{2}} + \beta_1 \hat v^{1}_{i+\frac{1}{2}},
\end{equation}
where $\beta_0,~\beta_1$ are non-linear weights such that  $\beta_1=1-\beta_0$ and from \eqref{approxes}, $\hat v^{0}_{i+\frac{1}{2}},~\hat v^{1}_{i+\frac{1}{2}}$ can be written as
\begin{equation}\label{subreconst1}
\begin{aligned}
	\hat{v}^{0}_{i+\frac{1}{2}}=\frac{3}{2}v_i-\frac{1}{2}v_{i-1},\\
	\hat{v}^{1}_{i+\frac{1}{2}}=\frac{1}{2}v_i+\frac{1}{2}v_{i+1}.
\end{aligned}
\end{equation}

\begin{lemma}\label{lemma1}
The polynomial approximation  \eqref{reconst1} using the given data $\left(x_j,v_j\right)_{j=i-1}^{j=i+1}$ is bounded i.e., $m_i\le \hat v_{i+\frac{1}{2}}\le M_i $ provided
\begin{itemize}
	\item [(a)] $(1+2r_i^{+})L_i^{+}\le \beta_0 \le -L_i^{+}$, for $r_{i}^{+}\ge 1$
	\item [(b)] $-L_i^{+}\le \beta_0 \le (1+2r_i^{+})L_i^{+}$, for $r_{i}^{+}\in [0,1)$
	% 		$-\infty\le \beta_0 \le \infty$ for $r_{i}^{+}=1$
	\item [(c)] $-L_i^{+}\le \beta_0 \le L_i^{+}$, for $r_{i}^{+}\in [-1,0]$
	\item [(d)] $(1+2r_i^{+})L_i^{+}\le \beta_0 \le L_i^{+}$, for $r_{i}^{+}\le -1$
\end{itemize}
\end{lemma}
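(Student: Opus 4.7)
The plan is to derive Lemma \ref{lemma1} as a direct specialization of Theorem \ref{first-theorem} to the fixed evaluation point $x = x_{i+\frac{1}{2}}$. Since the lemma only concerns the case where the smoothness parameter is $r_i^+$, I intend to substitute $x = x_{i+\frac{1}{2}}$ into each of the four bound pairs $(K_1^+, K_2^+)$, $(K_2^+, K_1^+)$, $(K_2^+, K_3^+)$, $(K_1^+, K_3^+)$ and then rescale by $(x - x_i)$ to read off the stated bounds on $\beta_0$.

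First I would compute the three elementary offsets at this point, namely $x - x_i = \Delta x/2$, $x - x_{i-1} = 3\Delta x/2$, and $x - x_{i+1} = -\Delta x/2$, and record that $x - x_i$ is strictly positive, so dividing inequalities by it does not flip the direction. Substituting into the definitions in Theorem \ref{first-theorem} then gives $K_1^+ = L_i^+\,\Delta x\bigl(r_i^+ + \tfrac{1}{2}\bigr)$, $K_2^+ = -\tfrac{1}{2}L_i^+\,\Delta x$, and $K_3^+ = \tfrac{1}{2}L_i^+\,\Delta x$.

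Next I would identify the weight $\alpha_0$ appearing in Theorem \ref{first-theorem} with $\beta_0$ in \eqref{reconst1} (this is just a renaming, since the two polynomial approximations coincide pointwise at $x_{i+\frac{1}{2}}$ after expressing $\hat v^0, \hat v^1$ via \eqref{subreconst1}). Thus the condition $K_1^+ \le \alpha_0(x-x_i) \le K_2^+$ from case (a) of the theorem becomes $L_i^+\Delta x\bigl(r_i^+ + \tfrac{1}{2}\bigr) \le \beta_0\,\Delta x/2 \le -L_i^+\Delta x/2$, and dividing through by $\Delta x/2 > 0$ yields $(1+2r_i^+)L_i^+ \le \beta_0 \le -L_i^+$, which is exactly case (a) of the lemma. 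The remaining cases (b), (c), (d) are handled by identical substitutions into the corresponding pairs of bounds.

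The proof is thus mechanical once the substitution is set up, and I do not anticipate any serious obstacle: the main care point is just verifying the algebraic simplification of the three $K_j^+$ values at $x_{i+\frac{1}{2}}$ and observing that the rescaling factor $x - x_i = \Delta x/2$ is positive so that the direction of every inequality is preserved. No new analytic argument beyond Theorem \ref{first-theorem} is needed.
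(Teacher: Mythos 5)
Your proposal is correct and is exactly the route the paper intends: the paper gives no separate proof of Lemma \ref{lemma1}, stating only that it follows from Theorem \ref{first-theorem}, and your substitution $x - x_i = \Delta x/2$, $x-x_{i+1} = -\Delta x/2$ into $K_1^+, K_2^+, K_3^+$ followed by division by the positive factor $\Delta x/2$ reproduces all four cases verbatim. The identification $\beta_0 = \alpha_0$ is justified as you note, since evaluating \eqref{approxes} at $x_{i+\frac{1}{2}}$ recovers \eqref{subreconst1}.
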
	
The WENO approximation at $x_{i+\frac{1}{2}}$ using \eqref{reconst1} is defined as convex combination of $\hat{v}^{0}_{i+\frac{1}{2}}$ and $\hat{v}^{1}_{i+\frac{1}{2}}$ i.e.,  the non-linear weights $\beta_0\ge 0,\beta_1\ge 0;~\beta_0+\beta_1=1$. Under these condition on weights, we have  
\begin{corollary}\label{corrolary1}
The weighted essentially non oscillatory (WENO) approximation $\hat v_{i+\frac{1}{2}}$ using \eqref{reconst1} is  data-bounded $(m_i\le \hat v_{i+\frac{1}{2}}\le M_i)$ under the condition
\begin{equation}\label{U.B-K}
	% 		K=\left\{\begin{array}{cc} 1 & if~r_{i}\in [0,2], \\
	% 		\frac{Sign(r_i)}{r_i-1} & elsewhere. \end{array}\right.
	% 		\textbf{or}~
	0\le \beta_0 \le K;~\text{where}
	~K=\min\left(1, \frac{sgn(r_i^{+})}{r_i^{+}-1}\right),~~sgn(r_i^{+})=
	\begin{cases}
		~~1~~ \text{if}~~r_i^{+}>0,\\
		-1~~ \text{if}~~r_i^{+}\leq 0.
	\end{cases}
\end{equation}
%	and $sgn(r_i^{+})=
%	\begin{cases}
%	~~1~~ \text{if}~~r_i^{+}>0,\\
%	-1~~ \text{if}~~r_i^{+}\leq 0.
%	\end{cases}$
\end{corollary}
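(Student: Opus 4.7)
The plan is to deduce the corollary directly from Lemma \ref{lemma1} by imposing the convexity constraint $\beta_0, \beta_1 \ge 0$ with $\beta_0+\beta_1=1$, which is equivalent to $\beta_0 \in [0,1]$. Intersecting each of the four $\beta_0$-intervals in Lemma \ref{lemma1} with $[0,1]$ should yield a uniform lower bound of $0$ together with an upper bound $K \le 1$ that depends only on $r_i^+$.

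First I would tabulate the signs of $L_i^+ = 1/(1-r_i^+)$ and of $1+2r_i^+$ in each regime of Lemma \ref{lemma1}. In case (a) with $r_i^+\ge 1$, $L_i^+\le 0$, hence $(1+2r_i^+)L_i^+\le 0$, so the lemma's lower endpoint is dominated by $0$, while the upper endpoint reduces to $-L_i^+ = 1/(r_i^+-1)$. In case (b) with $r_i^+\in[0,1)$, $L_i^+>0$, so $-L_i^+<0$ is again replaced by $0$; the upper endpoint $(1+2r_i^+)/(1-r_i^+) \ge 1$ is therefore clipped to $1$. In cases (c) and (d) with $r_i^+ \le 0$, one has $L_i^+\in(0,1/2]$, so the upper endpoint $L_i^+$ is already below $1$, while the respective lower endpoints $-L_i^+$ or $(1+2r_i^+)L_i^+$ are non-positive. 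Thus in every regime the feasible set has the form $[0, K]$ with $K \le 1$.

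Next I would unify the four values of $K$ into the single expression in the statement. For $r_i^+>0$ the raw upper bound on $\beta_0$ coming from Lemma \ref{lemma1} is $1/(r_i^+-1)$, which is active (i.e.\ strictly less than $1$) only when $r_i^+ \ge 2$ and is otherwise clipped by $1$; for $r_i^+\le 0$ the raw upper bound is $L_i^+ = 1/(1-r_i^+) = -1/(r_i^+-1)$. With the sign convention stipulated in the corollary these two formulas combine into $\operatorname{sgn}(r_i^+)/(r_i^+-1)$, and taking $\min$ with $1$ encapsulates the clipping that arises when this raw bound would exceed the convexity cap.

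The main obstacle is purely the sign bookkeeping across the four regimes, together with the boundary cases $r_i^+ = 0, \pm 1, \pm\infty$ where either $L_i^+$ or $1/(r_i^+-1)$ blows up or where the case divisions meet; these must be checked separately so that the unified formula for $K$ is well-defined (interpreting $\pm\infty$ limits of $L_i^+$ appropriately). Once those edge cases are dispensed with, the four-way case analysis collapses cleanly to the stated inequality $0 \le \beta_0 \le K$.
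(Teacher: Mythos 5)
Your overall route is the one the paper intends: the paper offers no explicit proof of Corollary~\ref{corrolary1}, presenting it as an immediate consequence of Lemma~\ref{lemma1} once the convexity restriction $\beta_0,\beta_1\ge 0$, $\beta_0+\beta_1=1$ (i.e.\ $\beta_0\in[0,1]$) is imposed, and that is precisely what you do. Your sign bookkeeping in cases (a), (c) and (d) is sound (modulo the minor slip that for $r_i^+\in[-1,0]$ one has $L_i^+\in[1/2,1]$ rather than $(0,1/2]$, which does not affect the conclusion $L_i^+\le 1$).

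There is, however, a genuine error in your unification step for the regime $r_i^+\in(0,1)$. There you correctly observe that the intersection of the Lemma~\ref{lemma1}(b) interval with $[0,1]$ is all of $[0,1]$, since the raw upper endpoint $(1+2r_i^+)/(1-r_i^+)\ge 1$. But you then claim that the formula $K=\min\bigl(1,\,\mathrm{sgn}(r_i^+)/(r_i^+-1)\bigr)$ ``encapsulates the clipping'' to $1$ in this regime. It does not: for $r_i^+\in(0,1)$ one has $\mathrm{sgn}(r_i^+)=1$ and $r_i^+-1\in(-1,0)$, so $\mathrm{sgn}(r_i^+)/(r_i^+-1)<-1$ and hence $K<0$, not $K=1$. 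Your asserted identity ``feasible set $=[0,K]$'' therefore fails exactly there, and the claim that the raw bound $1/(r_i^+-1)$ is ``otherwise clipped by $1$'' is false on $(0,1)$. The corollary itself survives, but only vacuously: when $K<0$ the hypothesis $0\le\beta_0\le K$ is unsatisfiable, so the implication holds trivially, and in all other regimes one checks the containment $[0,K]\subseteq[\text{Lemma \ref{lemma1} interval}]$ directly. To repair your proof you should replace the claimed equality of intervals by this containment argument, treating $r_i^+\in(0,1)$ as the degenerate case; as written, your argument asserts something about the formula for $K$ that is simply not true on that interval.
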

In Figure \ref{fig:RDBregion} geometric interpretation of the relation between non-linear weight $\beta_0$ and smoothness parameter $r_{i}^{+}$ for data bounded approximation at cell interface $x_{i+\frac{1}{2}}$ is given in view of Lemma \ref{lemma1} and Corollary \ref{corrolary1}.
\begin{figure}[htb!] 
\hspace{-1.2cm}
\begin{tabular}{cc}
	% 		\hspace{-1.2cm}
%	\includegraphics[scale=0.55]{./pdf_figure//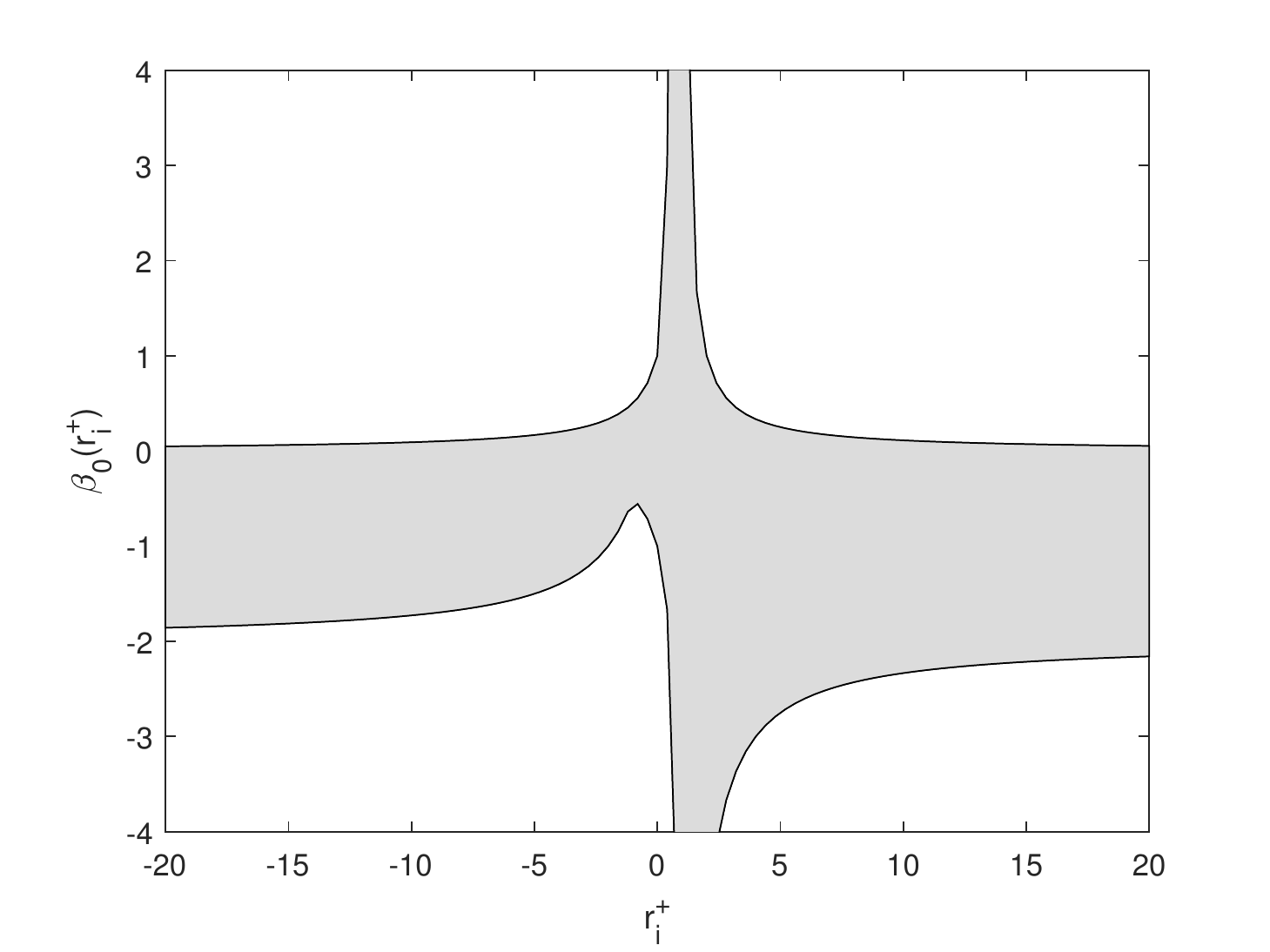}
	\includegraphics[scale=0.55]{rightDB.pdf} &
	\includegraphics[scale=0.55]{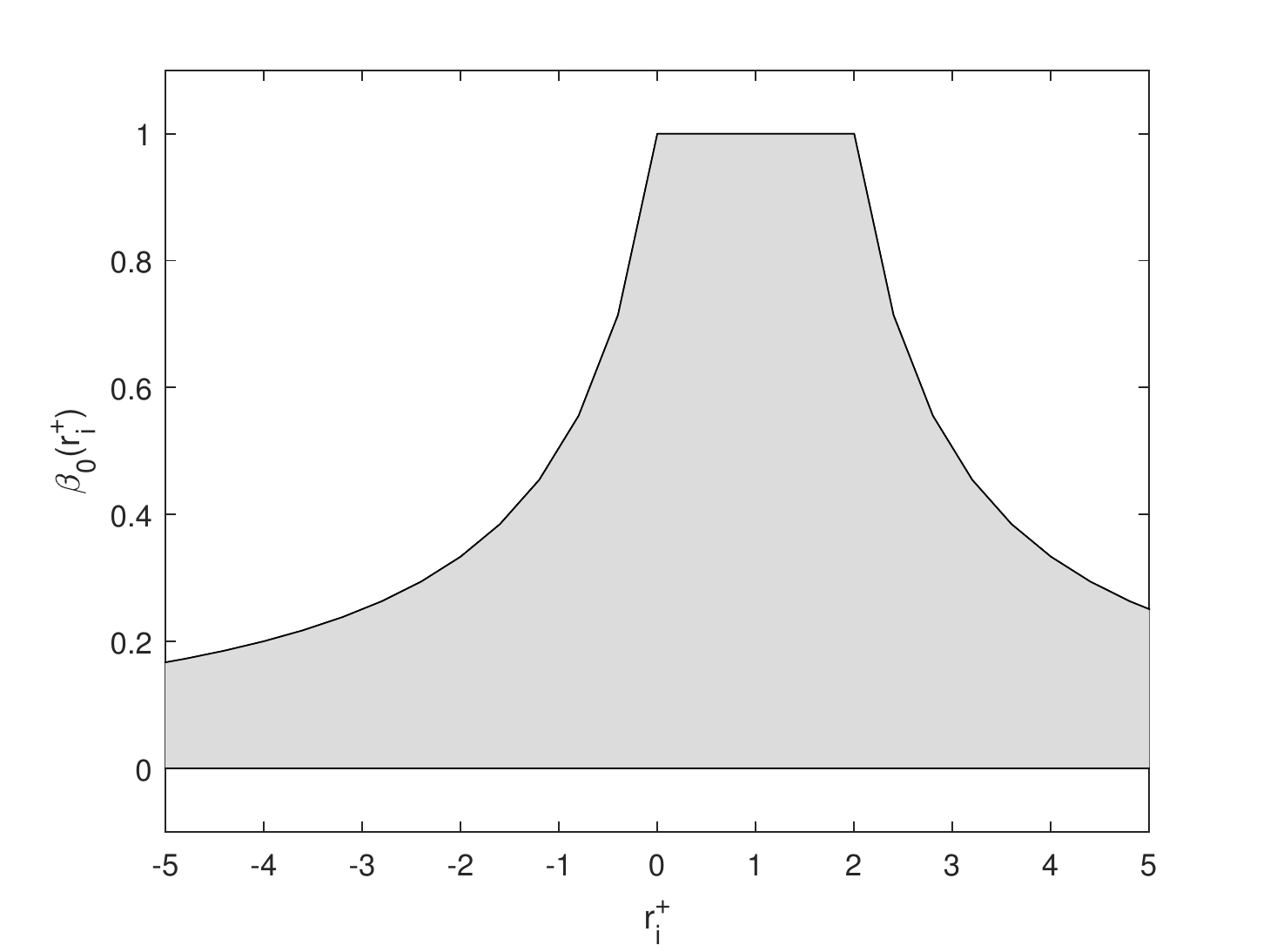} \\
	\textbf{a}&\textbf{b}
\end{tabular}
\caption{Region for data-bounded approximation using \eqref{reconst1} at $x_{i+\frac{1}{2}}:$ \textbf{(a)} Any approximation, \textbf{(b)} WENO approximation.}	
\label{fig:RDBregion}	
\end{figure}

\subsubsection{At cell interface $\mathbf{x_{i-\frac{1}{2}}}$:} The polynomial approximation $\hat{v}(x)$ \eqref{poly-approx} at  $x_{i-\frac{1}{2}}$ can be written in the form
	\begin{equation}\label{reconst2}
		\hat v_{i-\frac{1}{2}}= \mu_0 \hat v^{0}_{i-\frac{1}{2}} + \mu_1 \hat v^{1}_{i-\frac{1}{2}},
	\end{equation}
	where $\mu_0,~\mu_1$ are non-linear weights ;  $\mu_1=1-\mu_0$. And from \eqref{approxes} $\hat v^{0}_{i-\frac{1}{2}},~\hat v^{1}_{i-\frac{1}{2}}$ are as follows
	\begin{equation}\label{subreconst2}
		\begin{aligned}
			\hat{v}^{0}_{i-\frac{1}{2}}=\frac{1}{2}v_i+\frac{1}{2}v_{i-1}\\
			\hat{v}^{1}_{i-\frac{1}{2}}=\frac{3}{2}v_i-\frac{1}{2}v_{i+1}
		\end{aligned}
	\end{equation}
	\begin{lemma}\label{lemma2}
		The polynomial approximation $\hat v_{i-\frac{1}{2}}$ \eqref{reconst2} using the given data $\left(x_j,v_j\right)_{j=i-1}^{j=i+1}$ is bounded i.e.,  $m_i\le \hat v_{i-\frac{1}{2}}\le M_i $ provided
		\begin{itemize}
			\item [(a)] $(2-r_i^{-})L_i^{-} \le \mu_0 \le -3r_i^{-}L_i^{-}$, for $r_{i}^{-}\ge 1$
			\item [(b)] $-3r_i^{-}L_i^{-} \le \mu_0 \le (2-r_i^{-})L_i^{-}$, for $r_{i}^{-}\in [0,1)$
			% 				$-\infty\le \mu_0 \le \infty$ for $r_{i}^{-}=1$
			\item [(c)] $-r_i^{-}L_i^{-} \le \mu_0 \le (2-r_i^{-})L_i^{-}$, for $r_{i}^{-}\in [-1,0]$
			\item [(d)] $-r_i^{-}L_i^{-} \le \mu_0 \le -3r_i^{-}L_i^{-}$, for $r_{i}^{-}\le -1$
		\end{itemize}
	\end{lemma}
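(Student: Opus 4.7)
The plan is to obtain Lemma \ref{lemma2} as a direct specialization of Theorem \ref{first-theorem} at the left cell interface $x = x_{i-\frac{1}{2}}$, using the $r_i^{-}$ branch of the theorem (the branch whose proof is asserted to follow analogously from the $r_i^{+}$ proof). The key translation is that $x - x_i = -\Delta x/2$ at this interface and the role of the weight $\alpha_0$ in the theorem is played by $\mu_0$ here, so the geometric quantity $\alpha_0(x-x_i)$ appearing in all the conditions becomes $-\mu_0\,\Delta x/2$.

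First I would evaluate the three bound functions $K_1^{-},K_2^{-},K_3^{-}$ at $x-x_i=-\Delta x/2$:
$$K_1^{-} = \tfrac{3}{2}\,L_i^{-} r_i^{-}\Delta x, \qquad K_2^{-} = -L_i^{-}\!\left(1-\tfrac{r_i^{-}}{2}\right)\!\Delta x, \qquad K_3^{-} = \tfrac{1}{2}\,L_i^{-} r_i^{-}\Delta x.$$
Then in each of the four monotonicity/extremum cases (a)--(d) of Theorem \ref{first-theorem}, I would substitute these values into the corresponding compound inequality $K_a^{-} \le \alpha_0(x-x_i) \le K_b^{-}$, replace the middle term by $-\mu_0\,\Delta x/2$, and divide through by $-\Delta x/2$. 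Since the divisor is negative, the inequality reverses, and each case transforms into exactly the statement of Lemma \ref{lemma2}: for instance, case (a) with $r_i^{-}\ge 1$ produces $(2-r_i^{-})L_i^{-} \le \mu_0 \le -3r_i^{-}L_i^{-}$, and the remaining three cases follow by the same mechanical substitution. As an internal consistency check, one also verifies that \eqref{reconst2}--\eqref{subreconst2} coincide with \eqref{poly-approx} evaluated at $x_{i-\frac{1}{2}}$ so that the weight identifications $(\mu_0,\mu_1) \leftrightarrow (\alpha_0,\alpha_1)$ are legitimate.

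The only step requiring real thought, rather than pure bookkeeping, is justifying the $r_i^{-}$ branch of Theorem \ref{first-theorem} itself, since the proof there explicitly treats only $r_i^{+}$ and leaves the $r_i^{-}$ case to the reader. I would carry this out by re-expressing $\hat v(x)$ in three alternate identities analogous to \eqref{form1}--\eqref{form3} but anchored at $v_{i+1}$ (and grouping $\Delta_+v_i$ with $\alpha_1$ the way the original forms grouped $\Delta_-v_i$ with $\alpha_0$), then repeating the monotone and extremum sub-case arguments with $\Delta_+$ and $\Delta_-$ swapped. The main pitfall throughout is sign-tracking: because $x-x_i<0$ on the left interface and because $r_i^{-}$ can carry either sign, careful attention must be paid to each inequality reversal, and this is really where any arithmetic mistake would creep in. No further analytical difficulty is anticipated, and the four cases of Lemma \ref{lemma2} come out as a clean mirror image of Lemma \ref{lemma1}.
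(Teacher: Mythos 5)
Your proposal is correct and follows exactly the route the paper intends: the paper offers no separate proof of Lemma \ref{lemma2}, stating only that it follows from Theorem \ref{first-theorem}, and your specialization at $x-x_i=-\Delta x/2$ with $\mu_0$ in the role of $\alpha_0$ (including the sign reversal upon dividing by $-\Delta x/2$) reproduces all four cases with the correct bounds. Your evaluated constants $K_1^{-},K_2^{-},K_3^{-}$ and the resulting inequalities all check out, and your plan to supply the omitted $r_i^{-}$ branch of the theorem is a sensible completion of a step the paper leaves to the reader.
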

The WENO approximation at $x_{i-\frac{1}{2}}$ using \eqref{reconst2} is defined as convex combination of $\hat{v}^{0}_{i-\frac{1}{2}}$ and $\hat{v}^{1}_{i+\frac{1}{2}}$ i.e.,  the non-linear weights $\mu_0\ge 0,\mu_1\ge 0;~\mu_0+\mu_1=1$. Under these condition on weights, we have

	\begin{corollary}\label{corrolary2}
		The weighted essentially non oscillatory (WENO) approximation $\hat v_{i-\frac{1}{2}}$ using \eqref{reconst2} is data-bounded $(m_i\le \hat v_{i-\frac{1}{2}}\le M_i)$ under the condition 
		\begin{equation}\label{L.B-J}
			% 			J=\left\{\begin{array}{cc} \frac{1-2r_i}{1-r_i} & ~~~~if~r_{i}\in [0,\frac{1}{2}), \\
			% 			0 & if~r_{i}\ge \frac{1}{2}, \\
			% 			\frac{1}{1-r_i} & ~elsewhere. \end{array}\right.
			% 			\textbf{or}~
			{J\le \mu_0 \le 1};~\text{where} ~J=\max\left(0,\min\left(\frac{2-r_i^{-}}{1-r_i^{-}}, \frac{-r_i^{-}}{1-r_i^{-}}\right)\right)
		\end{equation}
	\end{corollary}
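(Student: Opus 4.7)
The plan is to derive Corollary~\ref{corrolary2} as a direct consequence of Lemma~\ref{lemma2} intersected with the WENO convexity constraint. Since the WENO weights satisfy $\mu_0,\mu_1\ge 0$ with $\mu_0+\mu_1=1$, we automatically have $0\le\mu_0\le 1$; the task is to combine this with the four case-wise bounds from Lemma~\ref{lemma2} and rewrite the result uniformly as $J\le\mu_0\le 1$.

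First, I will rewrite every bound from Lemma~\ref{lemma2} as a rational function of $r_i^{-}$, using $L_i^{-}=1/(1-r_i^{-})$, so the relevant quantities are $\tfrac{2-r_i^{-}}{1-r_i^{-}}$, $\tfrac{-3r_i^{-}}{1-r_i^{-}}$, and $\tfrac{-r_i^{-}}{1-r_i^{-}}$. I will then check case by case that the upper bound supplied by the lemma always exceeds $1$, so that the effective upper bound collapses to $1$: in case (a) this amounts to $\tfrac{-3r_i^{-}}{1-r_i^{-}}=\tfrac{3r_i^{-}}{r_i^{-}-1}\ge 3>1$ for $r_i^{-}\ge 1$; in cases (b) and (c), $\tfrac{2-r_i^{-}}{1-r_i^{-}}\ge 3/2>1$; in case (d), $\tfrac{-3r_i^{-}}{1-r_i^{-}}\ge 3/2>1$. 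This reduces the problem to identifying the lower bound in each case.

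Next, I will show the lower bound from Lemma~\ref{lemma2} intersected with $\mu_0\ge 0$ matches $J$ in each regime. The elementary identity
\begin{equation*}
\frac{2-r_i^{-}}{1-r_i^{-}}-\frac{-r_i^{-}}{1-r_i^{-}}=\frac{2}{1-r_i^{-}}
\end{equation*}
controls which of the two candidate expressions is the smaller one: it is positive exactly when $r_i^{-}<1$ and negative when $r_i^{-}>1$. Hence for $r_i^{-}\ge 1$ (case (a)) the minimum is $\tfrac{2-r_i^{-}}{1-r_i^{-}}$, which coincides with the lower bound supplied by the lemma; for $r_i^{-}\in[0,1)$ (case (b)) the minimum is $\tfrac{-r_i^{-}}{1-r_i^{-}}\le 0$, so clamping by $\max(0,\cdot)$ gives $J=0$, matching the fact that the lemma's negative lower bound is dominated by $\mu_0\ge 0$; for $r_i^{-}\in[-1,0]$ (case (c)) and $r_i^{-}\le -1$ (case (d)) the minimum is $\tfrac{-r_i^{-}}{1-r_i^{-}}\ge 0$, which agrees with the lemma's lower bound in both sub-regimes. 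Finally, I will note that the $\mathrm{sgn}$-switch between cases (b)/(c) and (a)/(d) is encoded cleanly by the outer $\max(0,\cdot)$ and inner $\min(\cdot,\cdot)$ in the definition of $J$.

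The step I expect to require the most care is not any individual algebraic manipulation but rather the bookkeeping that unifies the four piecewise bounds into the single expression for $J$. In particular, the sign of $L_i^{-}$ flips across $r_i^{-}=1$, which reverses the roles of upper and lower bounds in the products $(2-r_i^{-})L_i^{-}$ and $-3r_i^{-}L_i^{-}$; I will therefore tabulate the signs of $1-r_i^{-}$, $2-r_i^{-}$, $r_i^{-}$ in each of the four ranges before asserting which candidate realizes $\min\!\bigl(\tfrac{2-r_i^{-}}{1-r_i^{-}},\tfrac{-r_i^{-}}{1-r_i^{-}}\bigr)$, to make sure the formula $J=\max\!\bigl(0,\min(\cdot,\cdot)\bigr)$ does indeed reproduce the correct lemma bound in every regime.
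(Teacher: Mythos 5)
Your proposal is correct and follows exactly the route the paper intends: the paper states Corollary \ref{corrolary2} without an explicit proof, as the immediate consequence of intersecting the four case-wise regions of Lemma \ref{lemma2} with the convexity constraint $0\le\mu_0\le 1$, which is precisely what you carry out. Your case-by-case verification that the lemma's upper bounds always exceed $1$ and that the clamped lower bounds reproduce $J=\max\bigl(0,\min\bigl(\tfrac{2-r_i^{-}}{1-r_i^{-}},\tfrac{-r_i^{-}}{1-r_i^{-}}\bigr)\bigr)$ is accurate in all four regimes, so it simply supplies the details the paper omits.
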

	In Figure \ref{fig:LDBregion} geometric interpretation of the relation between non-linear weight $\mu_0$ and smoothness parameter $r_{i}^{-}$ for data bounded approximation at cell interface $x_{i-\frac{1}{2}}$ is given in view of Lemma \ref{lemma2} and Corollary \ref{corrolary2}.
	\begin{figure}[htb!] 
		\hspace{-1.2cm}
		\begin{tabular}{cc}
			% 		\hspace{-1.2cm}
			\includegraphics[scale=0.55]{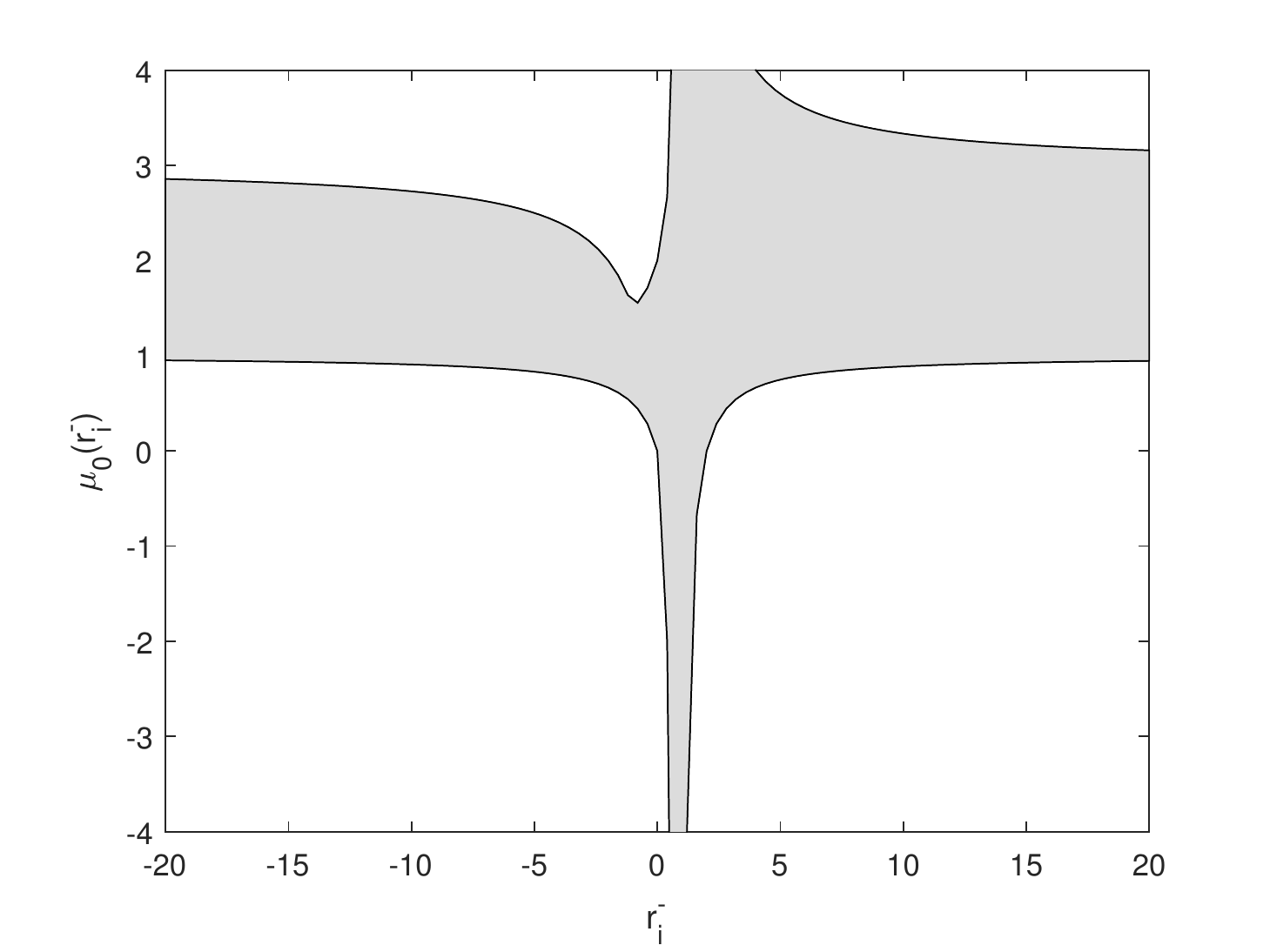} &
			\includegraphics[scale=0.55]{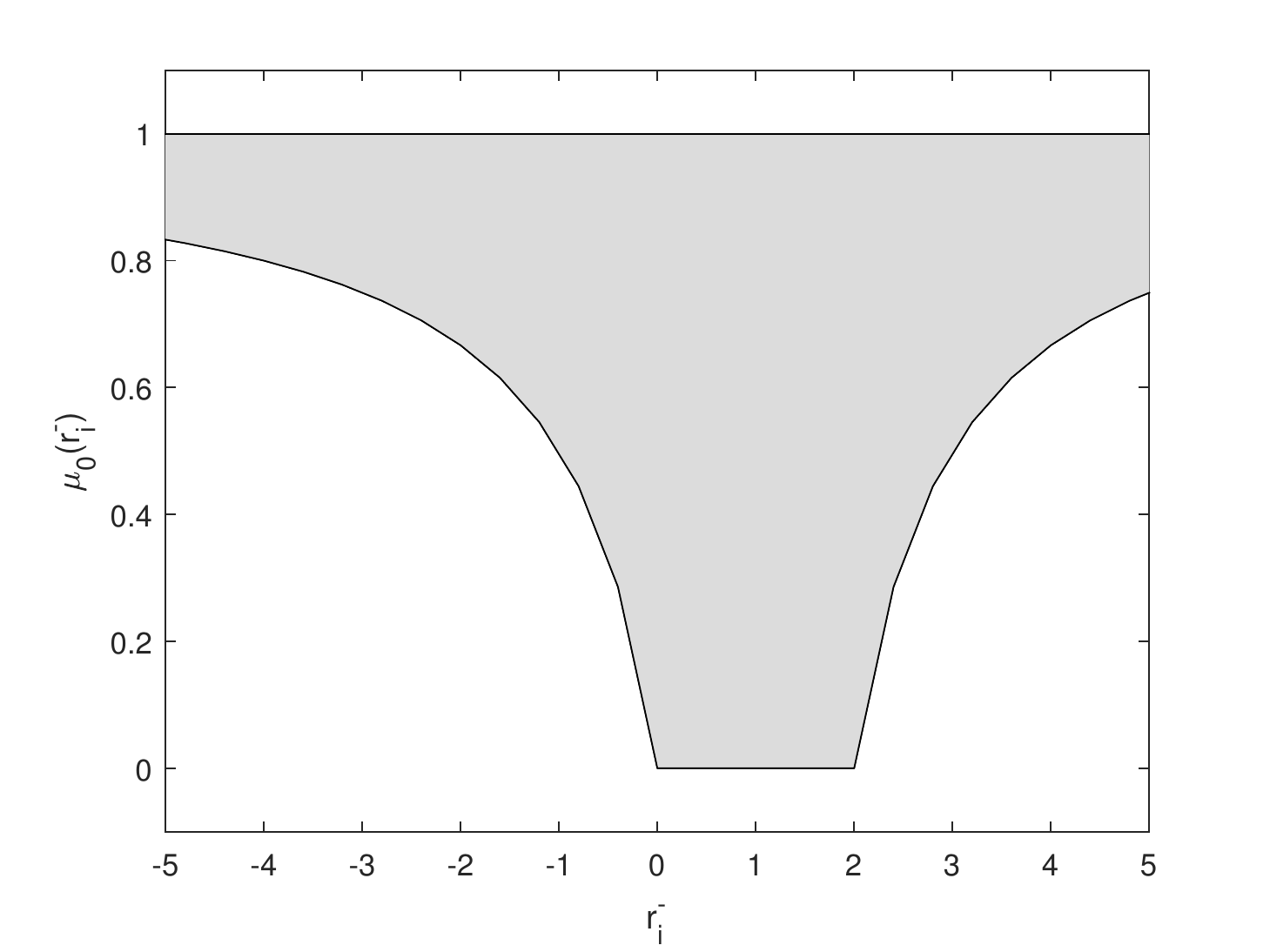} \\
			\textbf{a}&\textbf{b}
		\end{tabular}
		\caption{Region for data-bounded approximation using \eqref{reconst2} at $x_{i-\frac{1}{2}}:$ \textbf{(a)} Any approximation, \textbf{(b)} WENO approximation.}
		\label{fig:LDBregion}	
	\end{figure}
				
It is clear from the carried out analysis that any weight $\beta$ and $\mu$ lying within data-bounded shaded region given in Figure \ref{fig:RDBregion}(a) and \ref{fig:LDBregion}(a) will yield  data-bounded approximations $\hat{v}_{i + \frac{1}{2}}$ and $\hat{v}_{i - \frac{1}{2}}$ respectively. 
\\

\section{High order data-bounded WENO approximations: DB-WENO}\label{sec-3}
In this section high order data-bounded WENO approximation of the function $v(x)$ at the cell-interface $x_{i+\frac{1}{2}}$ is discussed. Note that weights $\beta_0$ and $\mu_0$ defined in \eqref{U.B-K} and \eqref{L.B-J} respectively give WENO approximation. However to ensure high order accuracy in WENO 
approximations at cell interfaces in sooth solution region, the non-linear weights $\beta_0$ and $\mu_0$ must attain ideal non-linear weight \cite{parvin2021new},

\subsection{High order DB-WENO interpolations}\label{subsec-3.1}
Here the data values are $v_j$ i.e., the point values of the function $v(x)$ at points $x_j$'s. Define the weights, 
\begin{equation}\label{weightsR}
\beta_0=\min(1/4,|K|),~K=\min\left(1, \frac{sgn(r_i^{+})}{r_i^{+}-1}\right)
\end{equation}
\begin{equation}\label{weightsL}
\mu_0=\max\left(3/4,\min\left(\frac{2-r_i^{-}}{1-r_i^{-}}, \frac{-r_i^{-}}{1-r_i^{-}}\right)\right)
\end{equation}
Note that, by the convexity property of weights, the other non-linear weights in \eqref{reconst1} and \eqref{reconst2} are $\beta_1=1-\beta_0$ and $~\mu_1=1-\mu_0$ respectively. From Figure \ref{DBweight} it can be seen that these non-linear weights $\beta_0$ and $\mu_0$ lies inside the data-bounded region of the approximations \eqref{reconst1} and \eqref{reconst2} respectively. Therefore they ensure the data-boundedness of approximations using \eqref{reconst1} and \eqref{reconst2} along with third order accuracy in smooth data region. Other possible DB-weights are given in Appendix A, which lies inside the data-bound region of \eqref{reconst1} and \eqref{reconst2}.
\begin{figure}[htb!] 
\begin{tabular}{cc}
	\centering
	\includegraphics[scale=0.5]{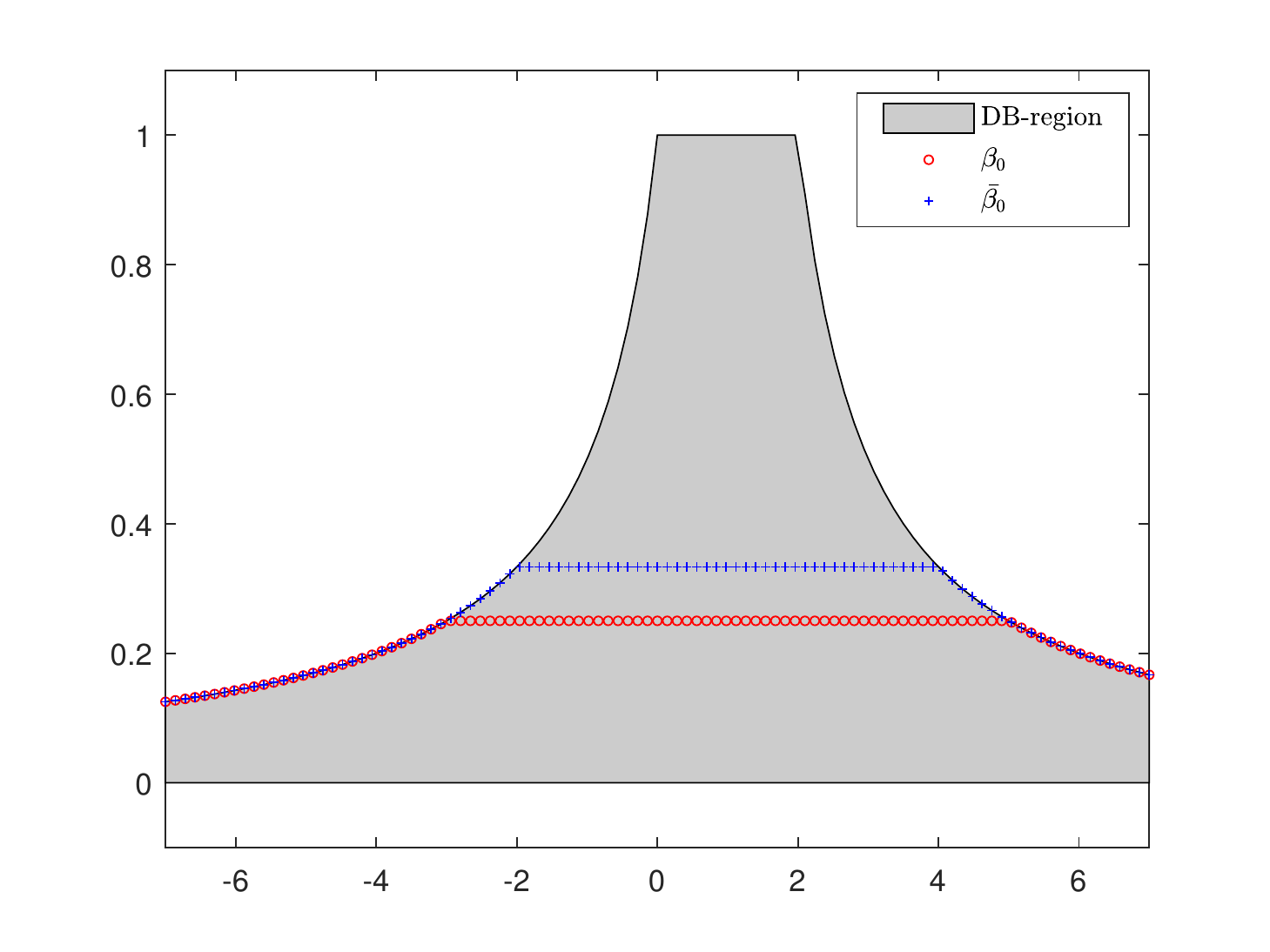} &
	\includegraphics[scale=0.5]{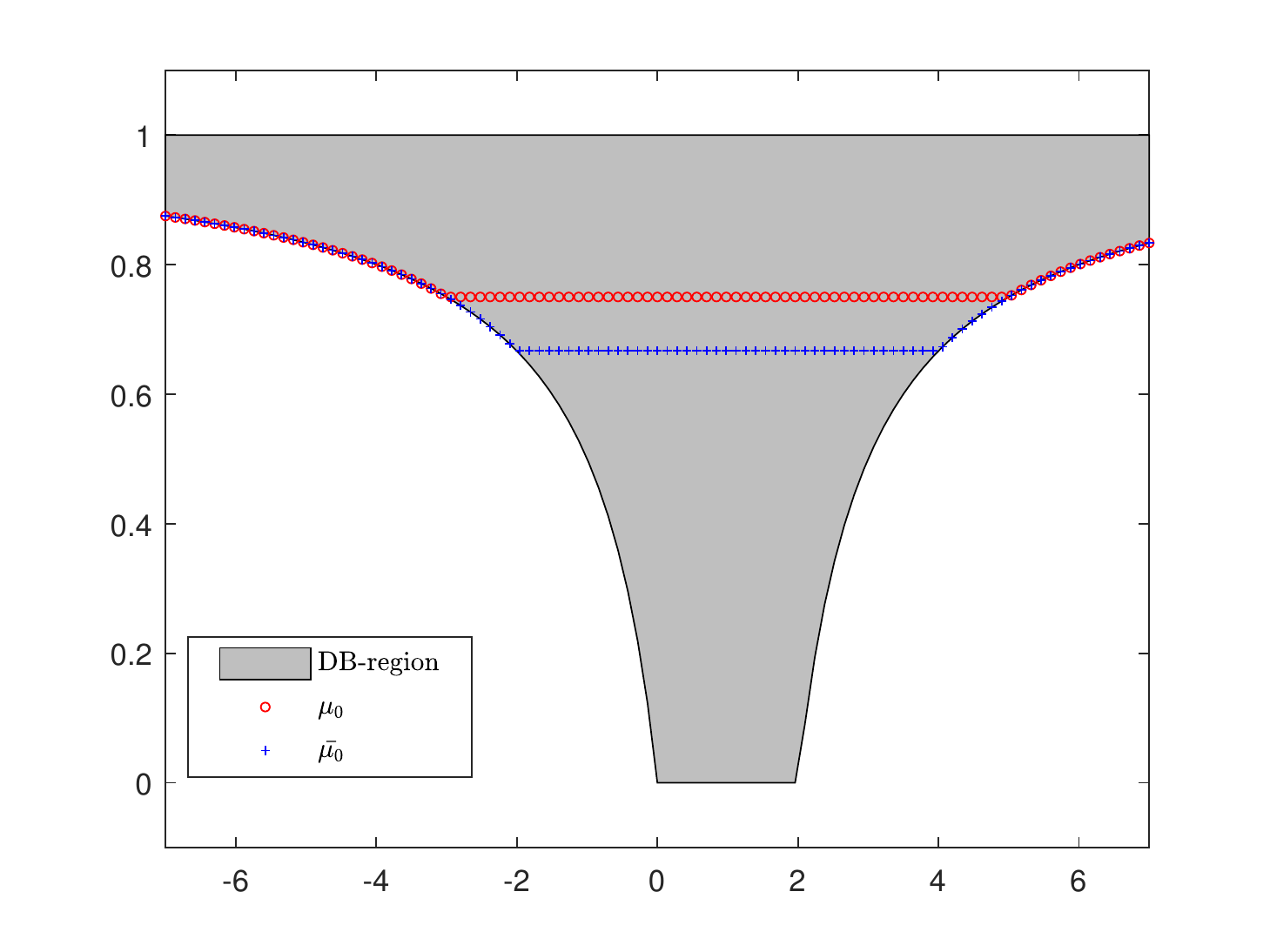} \\
	\textbf{(a)}&\textbf{(b)}
\end{tabular}
\caption{Proposed weights inside data-bound region}
\label{DBweight}	
\end{figure}

\subsubsection{Third order interpolation:} The DB-weights $\beta_0$ defined in \eqref{weightsR}) yield data-bounded interpolation $\hat{v}_{i+\frac{1}{2}}$ in \eqref{reconst1}. Note that in smooth data region $r_{i}^{\pm} \approx 1 $, therefore $\beta_0 =\frac{1}{4}$ and \eqref{reconst1} reduces to 
\begin{equation}
\hat v_{i+\frac{1}{2}}= \frac{1}{4}\left( \frac{3}{2}v_i-\frac{1}{2}v_{i-1}\right) + \frac{3}{4} \left( \frac{1}{2}v_i+\frac{1}{2}v_{i+1}\right),
\end{equation}
A simple Taylor series analysis shows
\begin{equation}
\hat{v}_{i+\frac{1}{2}} = v(x_{i + \frac{1}{2}}) + O(\Delta x^3)
\end{equation}

\subsubsection{Fourth order interpolation:} Fourth order accurate approximation at $x_{i+\frac{1}{2}}$ using four-point stencil $S(i)=\{x_{i-1},x_{i},x_{i+1},x_{i+2}\}$  can be written as \cite{janett2019novel} 
\begin{equation}\label{reconst4}
	\hat {\hat v}_{i+\frac{1}{2}}= \frac{1}{2}~\hat \hat v^{0}_{i+\frac{1}{2}} + \frac{1}{2}~\hat \hat v^{1}_{i+\frac{1}{2}},
\end{equation}
The approximations  $\hat {\hat{v}}^0~\&~\hat{\hat{v}}^1$ for the stencils $S_p(i):=\{x_{i+p-1},x_{i+p},x_{i+p+1}\},~(p=0,1)$ are of the form
\begin{equation}\label{subreconst4}
	\begin{aligned}
		\hat {\hat{v}}^{0}_{i+\frac{1}{2}}=\beta_0~ \left(\frac{3}{2}v_i-\frac{1}{2}v_{i-1}\right)+\beta_1~\left(\frac{1}{2}v_i+\frac{1}{2}v_{i+1}\right)\\
		\hat{\hat{v}}^{1}_{i+\frac{1}{2}}=\mu_0~\left(\frac{1}{2}v_i+\frac{1}{2}v_{i+1}\right)+\mu_1~\left(\frac{3}{2}v_{i+1}-\frac{1}{2}v_{i+2}\right)
	\end{aligned}
\end{equation}
The approximation \eqref{subreconst4} using the weights $\beta_0$ and $\mu_0$  defined in equations \eqref{weightsR} and \eqref{weightsL} ensure for data-boundedness of the interpolation \eqref{reconst4}. In smooth solution region $\beta_0 =\frac{1}{4}, \mu_0 = \frac{3}{4}$ thus, 	
\begin{eqnarray}\label{subreconst4acc}
\hat{\hat{v}}_{i+\frac{1}{2}}&=&\frac{1}{2}\left(\frac{1}{4} \left(\frac{3}{2}v_i-\frac{1}{2}v_{i-1}\right)+\frac{3}{4}~\left(\frac{1}{2}v_i+\frac{1}{2}v_{i+1}\right)\right)\\\nonumber
& & +\frac{1}{2}\left(\frac{3}{4}\left(\frac{1}{2}v_i+\frac{1}{2}v_{i+1}\right)+\frac{1}{4}\left(\frac{3}{2}v_{i+1}-\frac{1}{2}v_{i+2}\right)\right).
\end{eqnarray}	
It using simple Taylor series shows fourth-order accuracy in smooth data case. i.e., 
\begin{equation}
	\hat{\hat{v}}_{i+\frac{1}{2}} = v(x_{i + \frac{1}{2}}) + O(\Delta x^4)
\end{equation}

\subsection{High order DB-WENO reconstructions}\label{subsec-3.2}
Here the data values are  $\overline v_j$ i.e., the cell-average values of the function $v(x)$ in cell $[x_j, x_{j+1}]$. Define the following non-linear DB-weights $\overline \beta_0$ and $\overline \mu_0$ inside the data-bounded region in Figure \ref{DBweight}(a) and \ref{DBweight}(b).
\begin{equation}\label{Re-weightsR}
\overline \beta_0=\min(1/3,|K|),~K=\min\left(1, \frac{sgn(\overline r_i^{+})}{\overline r_i^{+}-1}\right)
\end{equation}
\begin{equation}\label{Re-weightsL}
\overline \mu_0=\max\left(2/3,\min\left(\frac{2-\overline r_i^{-}}{1-\overline r_i^{-}}, \frac{-\overline r_i^{-}}{1-\overline r_i^{-}}\right)\right),
\end{equation}
where $\overline r_i^{\pm}=\frac{\Delta_{\mp}\overline v_i}{\Delta_{\pm}\overline v_i} \in \mathbb{R}\cup\{\pm \infty\}$ and $\overline v_i=\frac{1}{\Delta x}\int_{x_{i -\frac{1}{2}}}^{x_{i + \frac{1}{2}}}v(\xi) d\xi.$\\

\subsubsection{Third order reconstruction:} The use of the DB-weight $\overline \beta_0$ define in \eqref{Re-weightsR} yields a data-bounded WENO reconstruction i.e.
\begin{equation}\label{Rreconst1}
\hat{v}_{i+\frac{1}{2}}=\overline \beta_0~ \left(\frac{3}{2}\overline v_i-\frac{1}{2}\overline v_{i-1}\right)+(1-\overline \beta_0)~\left(\frac{1}{2}\overline v_i+\frac{1}{2}\overline v_{i+1}\right). 
\end{equation}
Note that WENO reconstruction \eqref{Rreconst1} is third order \cite{shu2009high} i.e.,
$$\hat{v}_{i+\frac{1}{2}} = v(x_{i + \frac{1}{2}}) + O(\Delta x^3).$$

\subsubsection{Fourth order reconstruction:} The use of the DB-weights $\overline \beta_0$ and $\overline \mu_0$  defined in equations \eqref{Re-weightsR} and \eqref{Re-weightsL}) ensure for data-boundedness of the WENO reconstruction  is of the form
\begin{eqnarray}\label{Rreconst4}
\hat{\hat v}_{i+\frac{1}{2}}&=&	\frac{1}{2}\left[\overline \beta_0 \left(\frac{3}{2}\overline v_i-\frac{1}{2}\overline v_{i-1}\right)+\overline \beta_1\left(\frac{1}{2}\overline v_i+\frac{1}{2}\overline v_{i+1}\right)\right]\\ \nonumber 
& & + \frac{1}{2}\left[\overline \mu_0\left(\frac{1}{2}\overline v_i+\frac{1}{2}\overline v_{i+1}\right)+\overline  \mu_1\left(\frac{3}{2}\overline v_{i+1}-\frac{1}{2}\overline v_{i+2}\right)\right],
\end{eqnarray}
where $\overline \beta_1=1-\overline \beta_0$ and $~\overline \mu_1=1-\overline \mu_0.$ It can be shown using Taylor series expansion that the WENO reconstruction \eqref{Rreconst4} is fourth order accurate  i.e, 
$$\hat{\hat{v}}_{i+\frac{1}{2}} = v(x_{i + \frac{1}{2}}) + O(\Delta x^4).$$

\subsection{Computational verifications}\label{subsec-3.3}
In this section the data boundedness of the DB-WENO approximations (both interpolations and reconstructions) are verified. The accuracy of the DB-WENO approximations are also shown in tabular form. In each test case, a finite domain $x\in [-1,1]$ is considered with periodic boundary conditions. The following name convention is used through out this section.
\begin{itemize}
\item DBI-WENO3 \& DBR-WENO3 represents the result of data-bounded third order interpolation \eqref{reconst1} and reconstruction \eqref{Rreconst1} obtained by the DB-weights $\beta_0,~\overline \beta_0$ respectively.
\item DBI-WENO4 \& DBR-WENO4 represents the result of data-bounded fourth order interpolation \eqref{reconst4} and reconstruction \eqref{Rreconst4} obtained by the DB-weights $(\beta_0,\mu_0)$ \& $(\overline \beta_0,\overline \mu_0)$ respectively.
\item Lagrange3 \& Lagrange4 represents the results obtained by third order and fourth order Lagrange interpolations at $x_{i+\frac{1}{2}}$. 
\end{itemize}
\subsubsection{Test for accuracy of DB-WENO approximation at $x_{i+\frac{1}{2}}$:} 
We consider the data generated from smooth function
\begin{equation}\label{Ex1}
v(x)=sin(\pi x),~~x\in[-1,1]
\end{equation}
The error in approximating the interface values and the corresponding convergence rates are shown in Tables \ref{table:DBApprox-3} \& \ref{table:DBApprox-4}. Table \ref{table:DBApprox-3} shows that DBI-WENO3 and DBR-WENO3 give third order accuracy in both $L^1$ and $L^{\infty}$ norm. The fourth order convergence rate of DBI-WENO4 and DBR-WENO4 in the norms $L^1$ and $L^{\infty}$ are cleary visible by the Table \ref{table:DBApprox-4}. 
\begin{table}[htb!]
\centering
\begin{tabular}{|c|c|c|c|c|}
	\hline N  & DBI-WENO3  &  Rate &    DBI-WENO3   & Rate \\ 
	& $L^\infty$ error  &   &    $L^1$ error    &  \\  
	\hline 40 & 2.82050e-04 &  - & 3.74372e-04 & - \\ 
	\hline 80 & 3.26558e-05 &  3.11 & 4.24272e-05 & 3.14 \\ 
	\hline 160 & 3.93012e-06 &  3.05 & 5.05406e-06 & 3.07 \\ 
	\hline 320 & 4.82089e-07 &  3.03 & 6.16857e-07 & 3.03 \\ 
	\hline 640 & 5.96973e-08 &  3.01 & 7.61965e-08 & 3.02 \\ 
	\hline 1280 & 7.42722e-09 &  3.01 & 9.46827e-09 & 3.01 \\ 
	\hline 
	& &  &  &\\
	\hline N  & DBR-WENO3  &  Rate &    DBR-WENO3   & Rate \\ 
	& $L^\infty$ error  &   &    $L^1$ error    &  \\  
	\hline 40 & 3.75767e-04 &  - & 4.98765e-04 & - \\ 
	\hline 80 & 4.35328e-05 &  3.11 & 5.65588e-05 & 3.14 \\ 
	\hline 160 & 5.23991e-06 &  3.05 & 6.73843e-06 & 3.07 \\ 
	\hline 320 & 6.42779e-07 &  3.03 & 8.22467e-07 & 3.03 \\ 
	\hline 640 & 7.95962e-08 &  3.01 & 1.01595e-07 & 3.02 \\ 
	\hline 1280 & 9.90297e-09 &  3.01 & 1.26243e-08 & 3.01 \\ 
	\hline 
\end{tabular} 
\caption {Error and convergence rate of third order DB-WENO approximations at $x_{i+\frac{1}{2}}$.}
\label{table:DBApprox-3}
\end{table}	

\begin{table}[htb!]
\centering
\begin{tabular}{|c|c|c|c|c|}
	\hline N  & DBI-WENO4  &  Rate &    DBI-WENO4   & Rate \\ 
	& $L^\infty$ error  &   &    $L^1$ error    &  \\  
	\hline 40 & 1.74786e-05 &  - & 2.23558e-05 & - \\ 
	\hline 80 & 9.86319e-07 &  4.15 & 1.25718e-06 & 4.15 \\ 
	\hline 160 & 5.86064e-08 &  4.07 & 7.46397e-08 & 4.07 \\ 
	\hline 320 & 3.57197e-09 &  4.04 & 4.54827e-09 & 4.04 \\ 
	\hline 640 & 2.20467e-10 &  4.02 & 2.80712e-10 & 4.02 \\ 
	\hline 1280 & 1.36933e-11 &  4.01 & 1.74349e-11 & 4.01 \\ 
	\hline 
	& &  &  &\\
	\hline N  & DBR-WENO4  &  Rate &    DBR-WENO4   & Rate \\ 
	& $L^\infty$ error  &   &    $L^1$ error    &  \\  
	\hline 40 & 2.48342e-05 &  - & 3.17639e-05 & - \\ 
	\hline 80 & 1.40244e-06 &  4.15 & 1.78757e-06 & 4.15 \\ 
	\hline 160 & 8.33466e-08 &  4.07 & 1.06148e-07 & 4.07 \\ 
	\hline 320 & 5.08007e-09 &  4.04 & 6.46856e-09 & 4.04 \\ 
	\hline 640 & 3.13573e-10 &  4.02 & 3.99239e-10 & 4.02 \\ 
	\hline 1280 & 1.94549e-11 &  4.01 & 2.47363e-11 & 4.01 \\ 
	\hline 
\end{tabular} 
\caption {Error and convergence rate of fourth order DB-WENO approximations at $x_{i+\frac{1}{2}}$.}
\label{table:DBApprox-4}
\end{table}	

\subsubsection{Test for data-boundedness:}
We now verify the data-boundedness of DB-WENO interpolations and reconstructions by two test functions.
\begin{itemize}
\item[\textbf{Test 1:}] Consider the data generated from a smooth runge function
\begin{equation}\label{Ex2}
v(x)=\frac{1}{1+25 x^2},~~~~x\in [-1,1]
\end{equation} 
\item[\textbf{Test 2:}] Consider the data generated from a discontinuous function
\begin{equation}\label{Ex3}
v(x)=\left\{\begin{array}{cc}
1 & |x|\leq 0.3,\\
0 & else.
\end{array}\right.
\end{equation}	
\end{itemize}
In Figure \ref{DBApprox-smooth} and Figure \ref{DBApprox-discont} underlying function data of \eqref{Ex2} and \eqref{Ex3} respectively is given along with the third and forth order reconstructed values at $x_{i+\frac{1}{2}}$ using DBI-WENO, DBR-WENO and Lagrange approximation. Figure \ref{DBApprox-smooth} and Figure \ref{DBApprox-discont} verify that third and fourth-order Lagrange interpolations are not data-bounded approximations, while DBI-WENO, DBR-WENO i.e., WENO interpolations and WENO reconstructions using proposed DB-weights are data-bounded approximations.

\begin{figure}[htb!] 
%	\begin{tabular}{cc}
%		\hspace{-0.8cm}
	\includegraphics[scale=0.55]{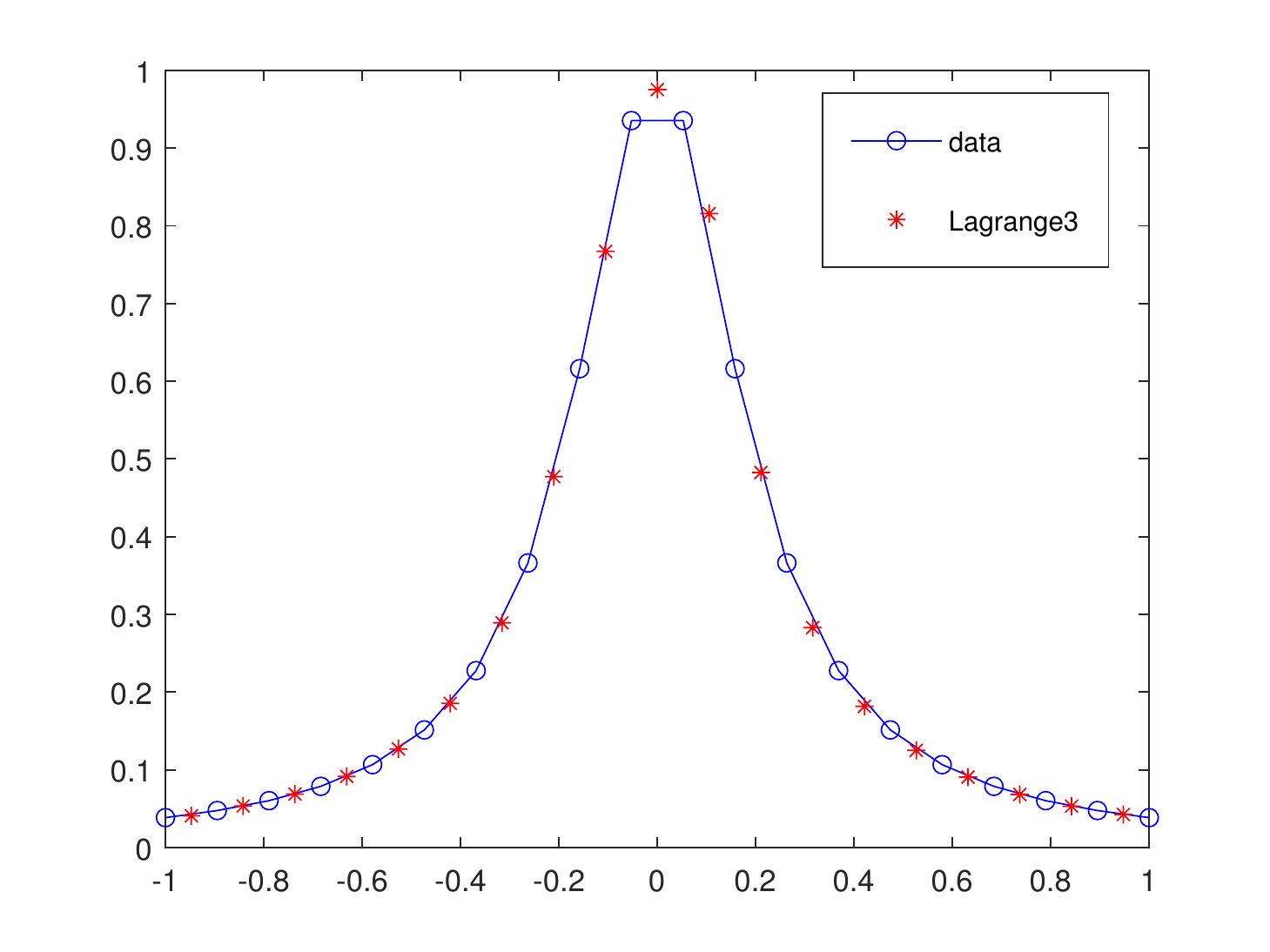}
	\includegraphics[scale=0.55]{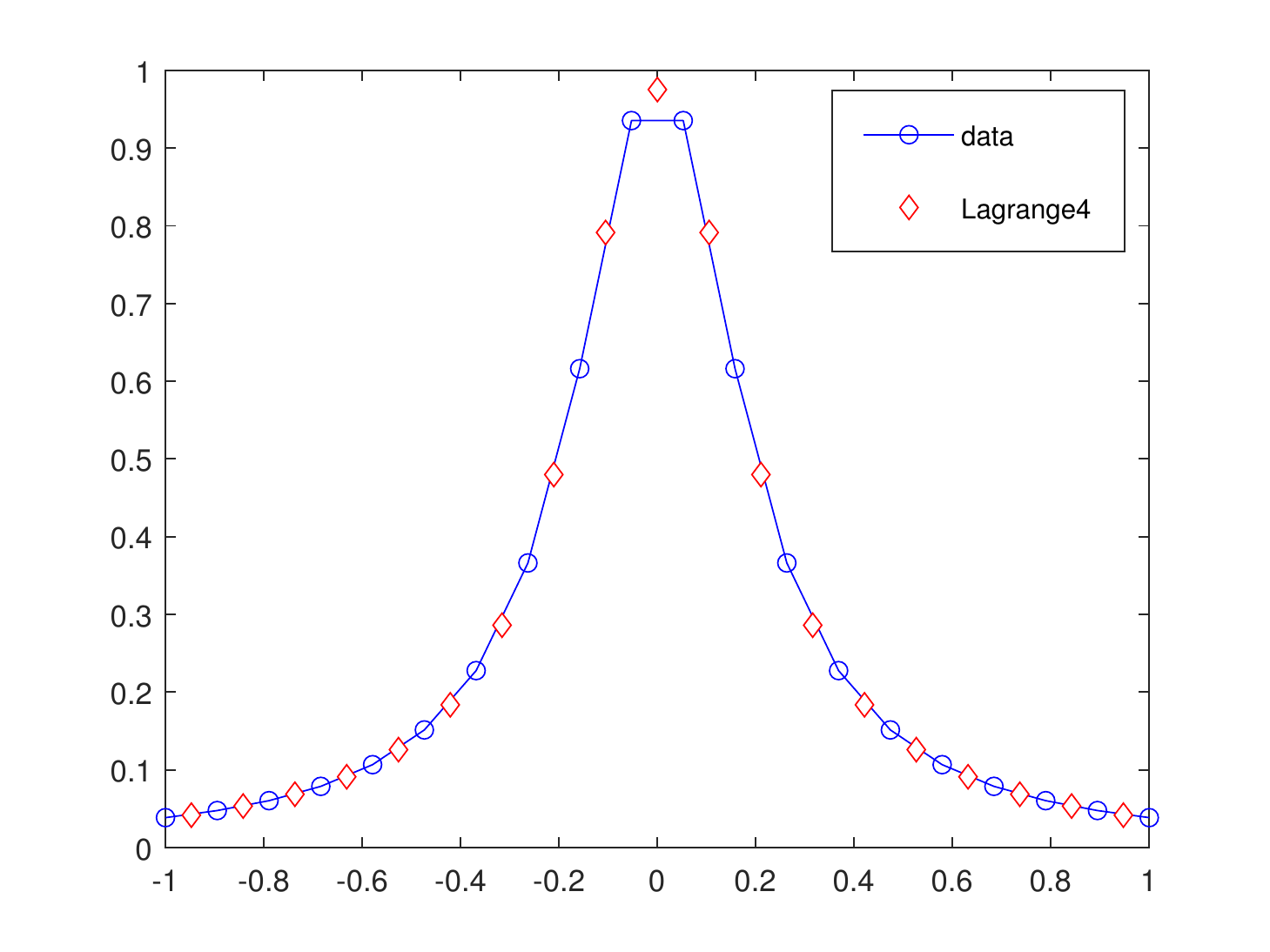}
	\includegraphics[scale=0.55]{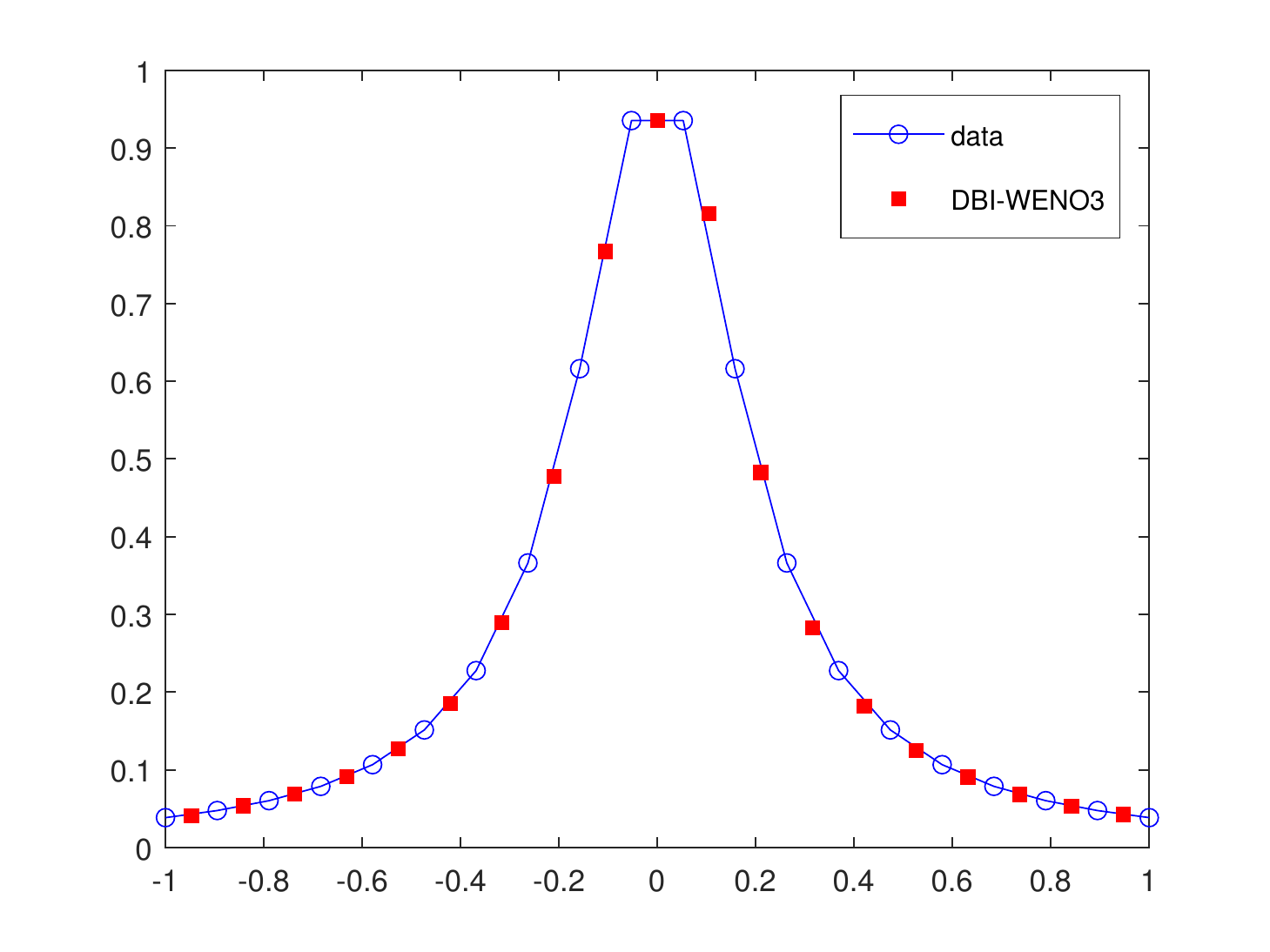} 
	\includegraphics[scale=0.55]{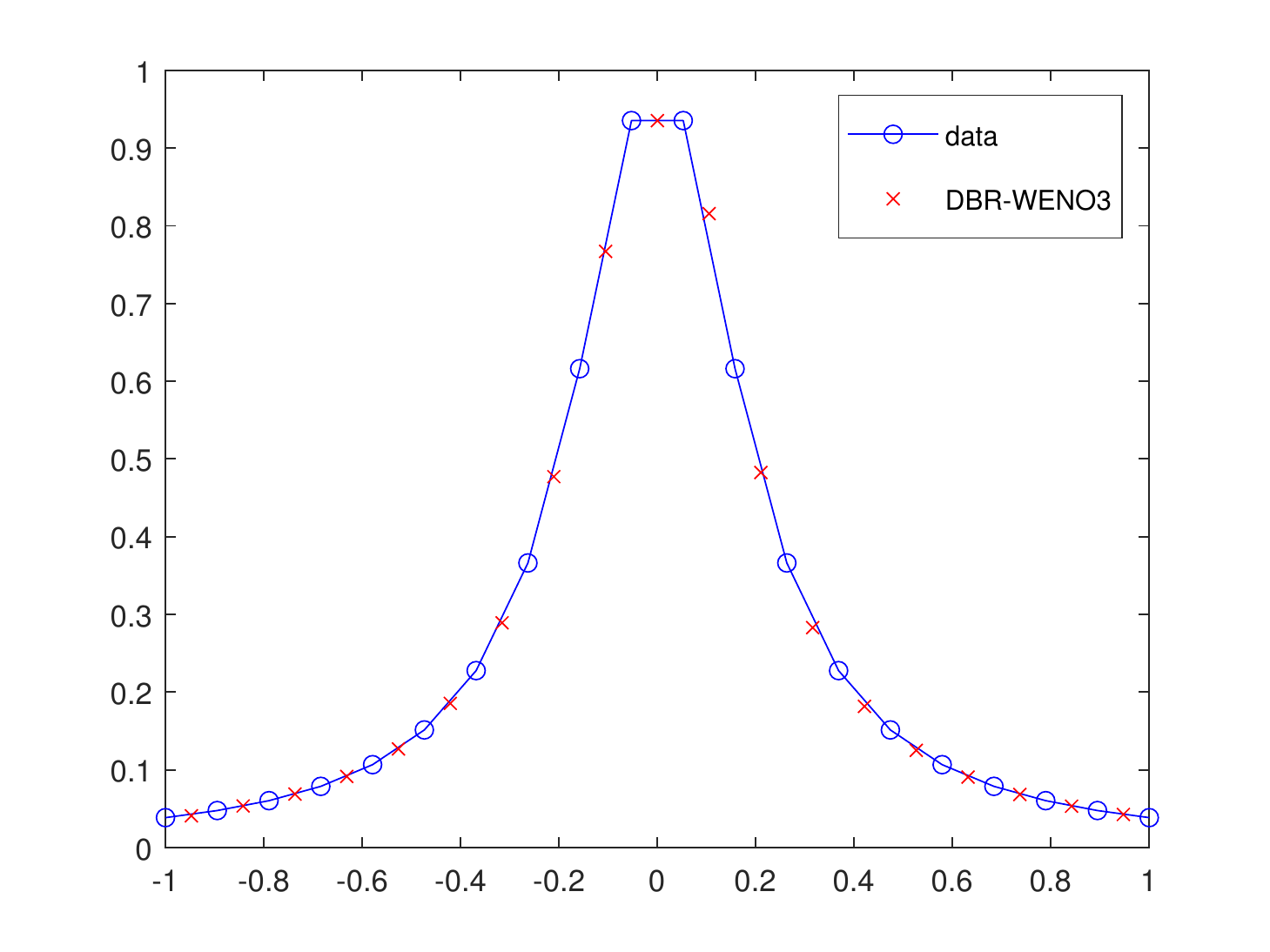}
	\includegraphics[scale=0.55]{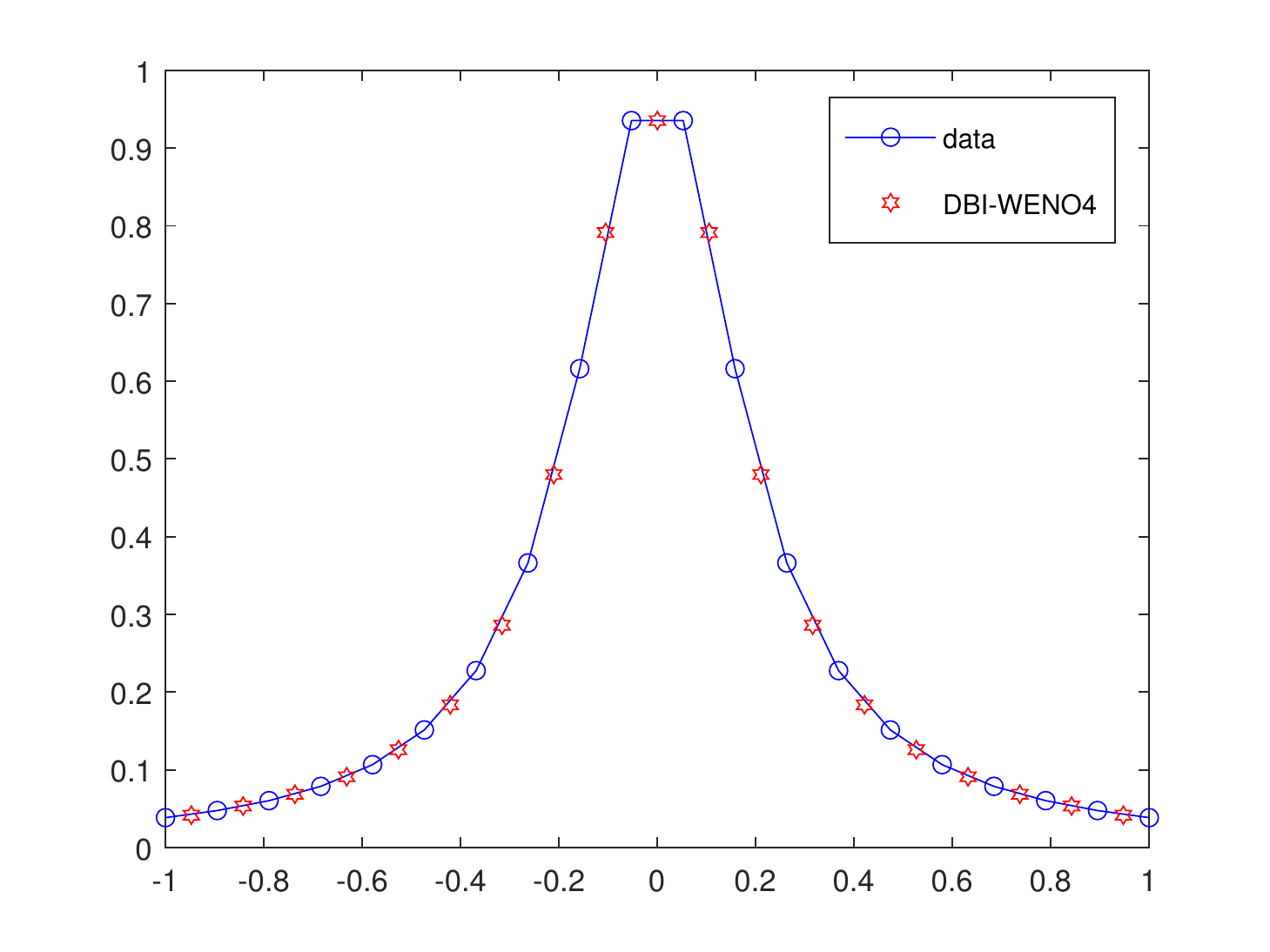} 
	\includegraphics[scale=0.55]{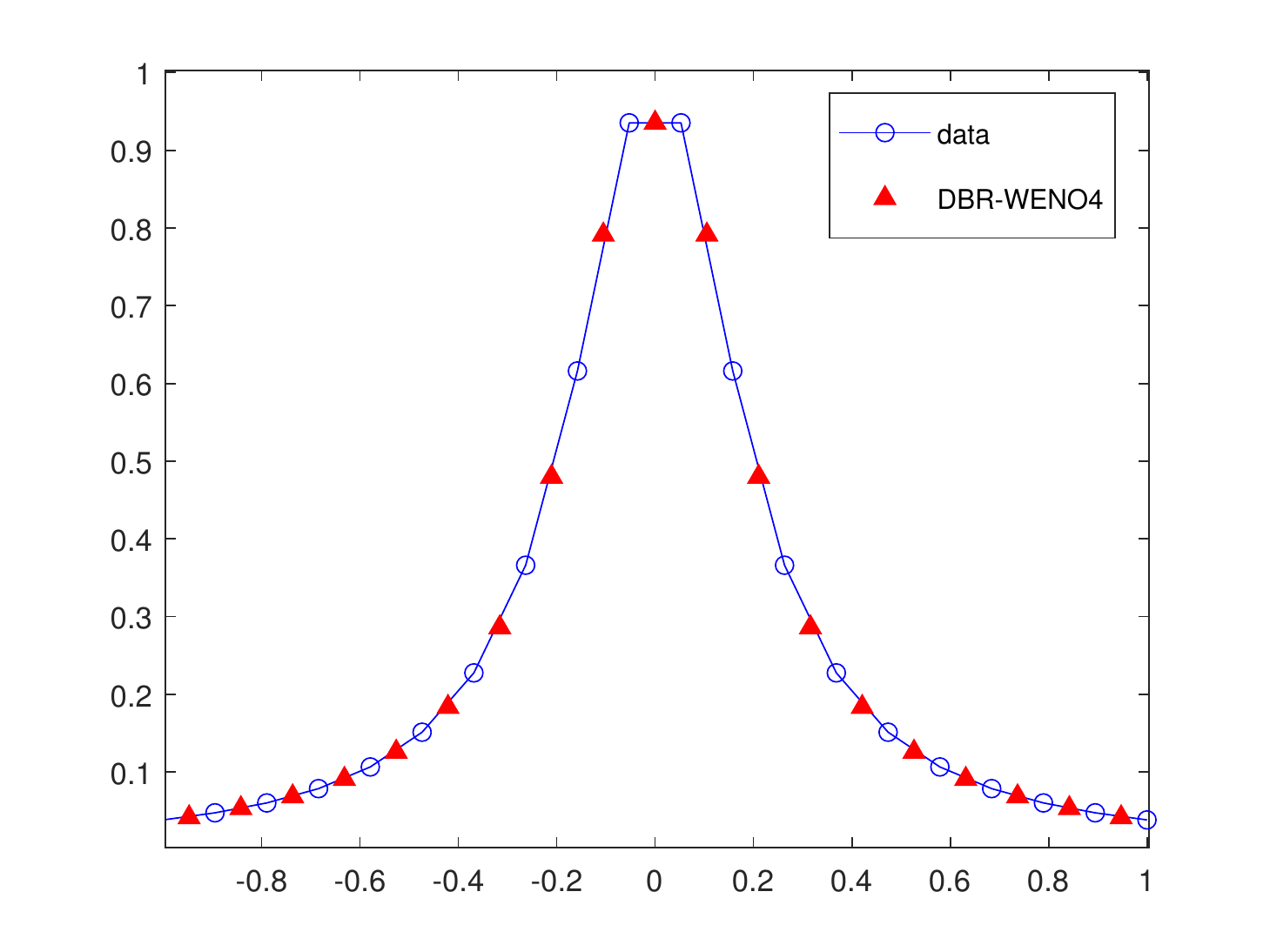}
%		\textbf{a}&\textbf{b}
%	\end{tabular}
\caption {Data-boundedness of third and fourth order DB-WENO approximations for smooth runge function \eqref{Ex2} with 20 grid points.}
\label{DBApprox-smooth}	
\end{figure} 
\begin{figure}[htb!] 
%	\begin{tabular}{cc}
%		\hspace{-1.2cm}
	\includegraphics[scale=0.55]{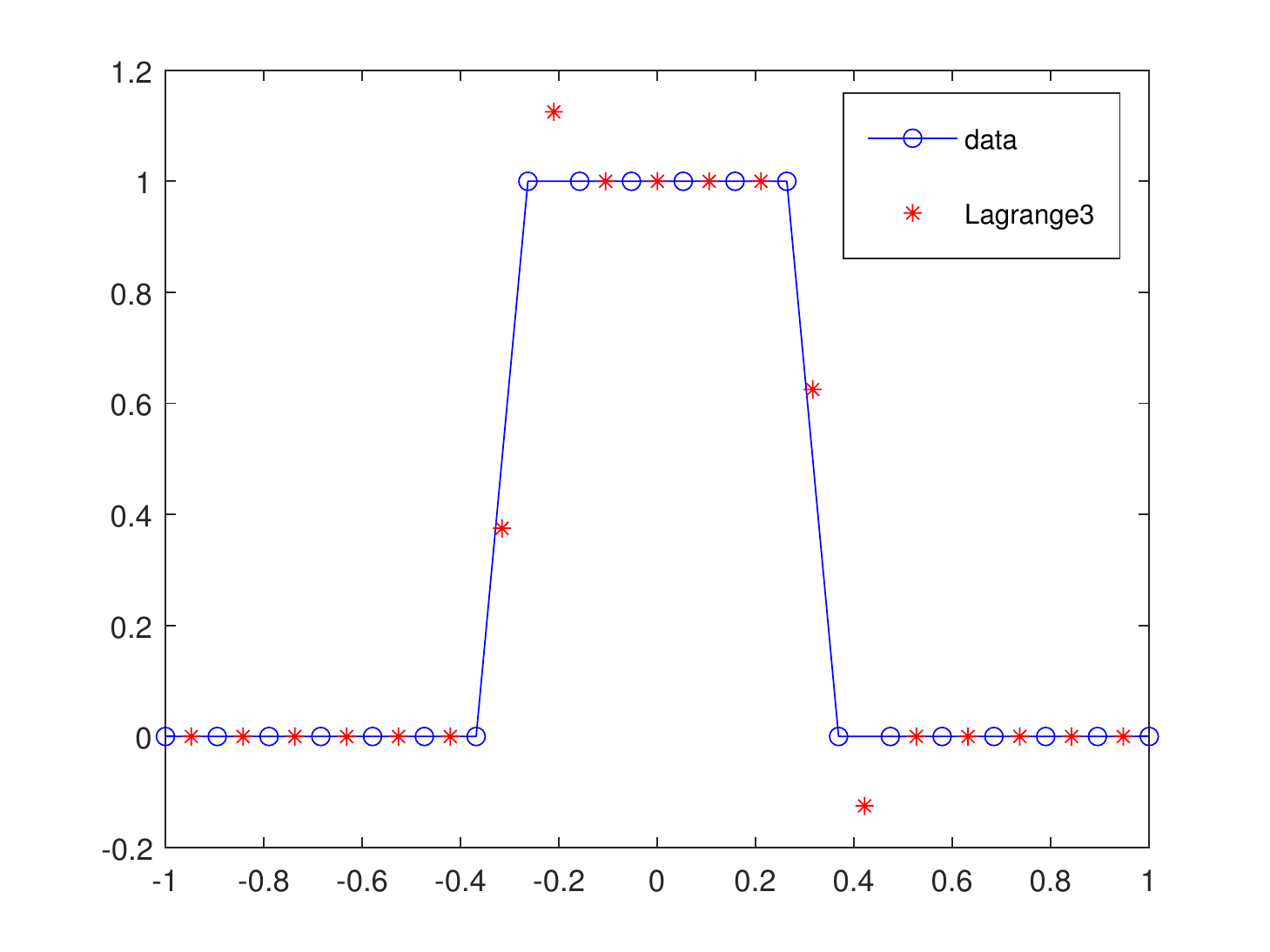}
	\includegraphics[scale=0.55]{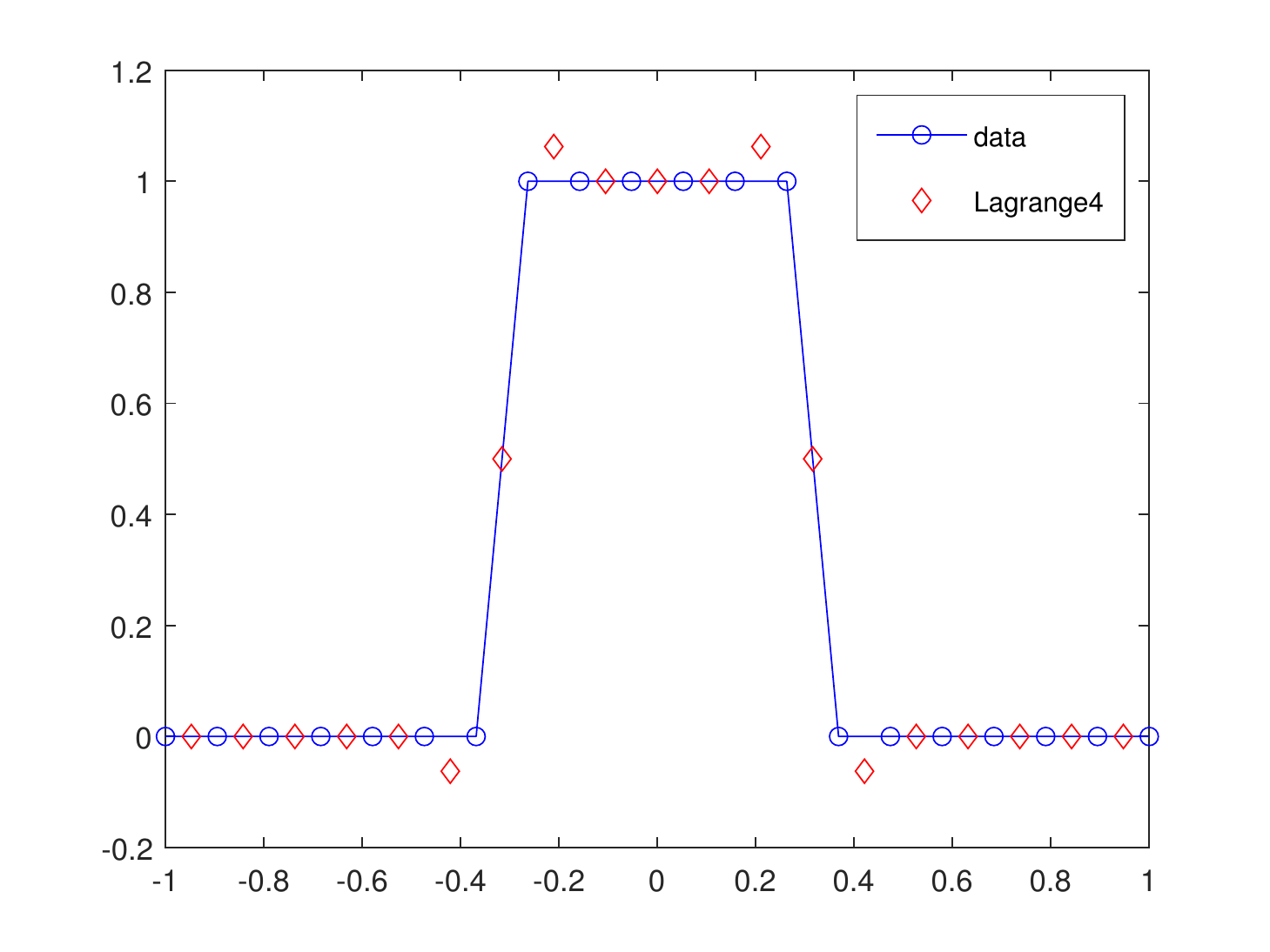}
	\includegraphics[scale=0.55]{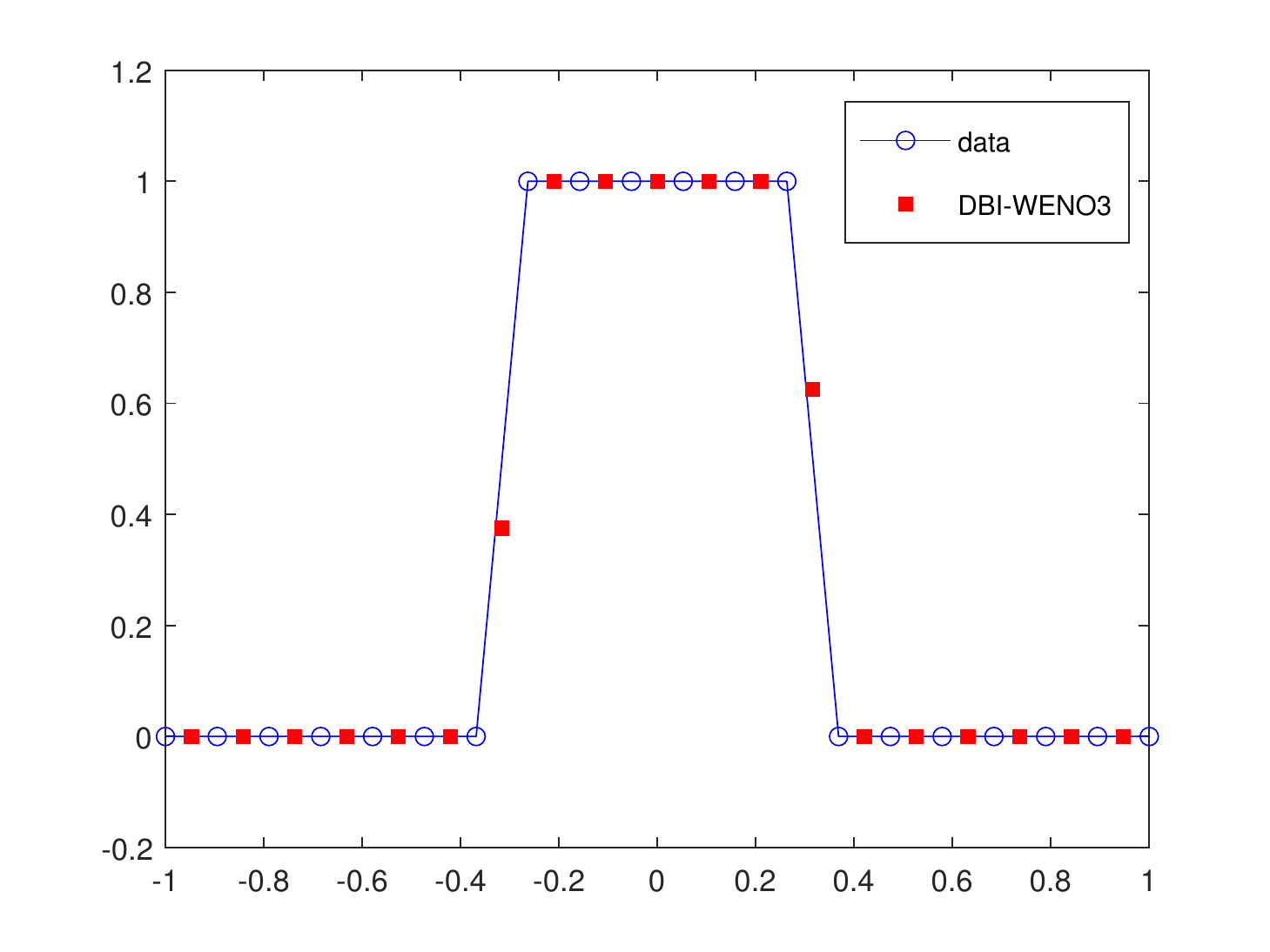}
	\includegraphics[scale=0.55]{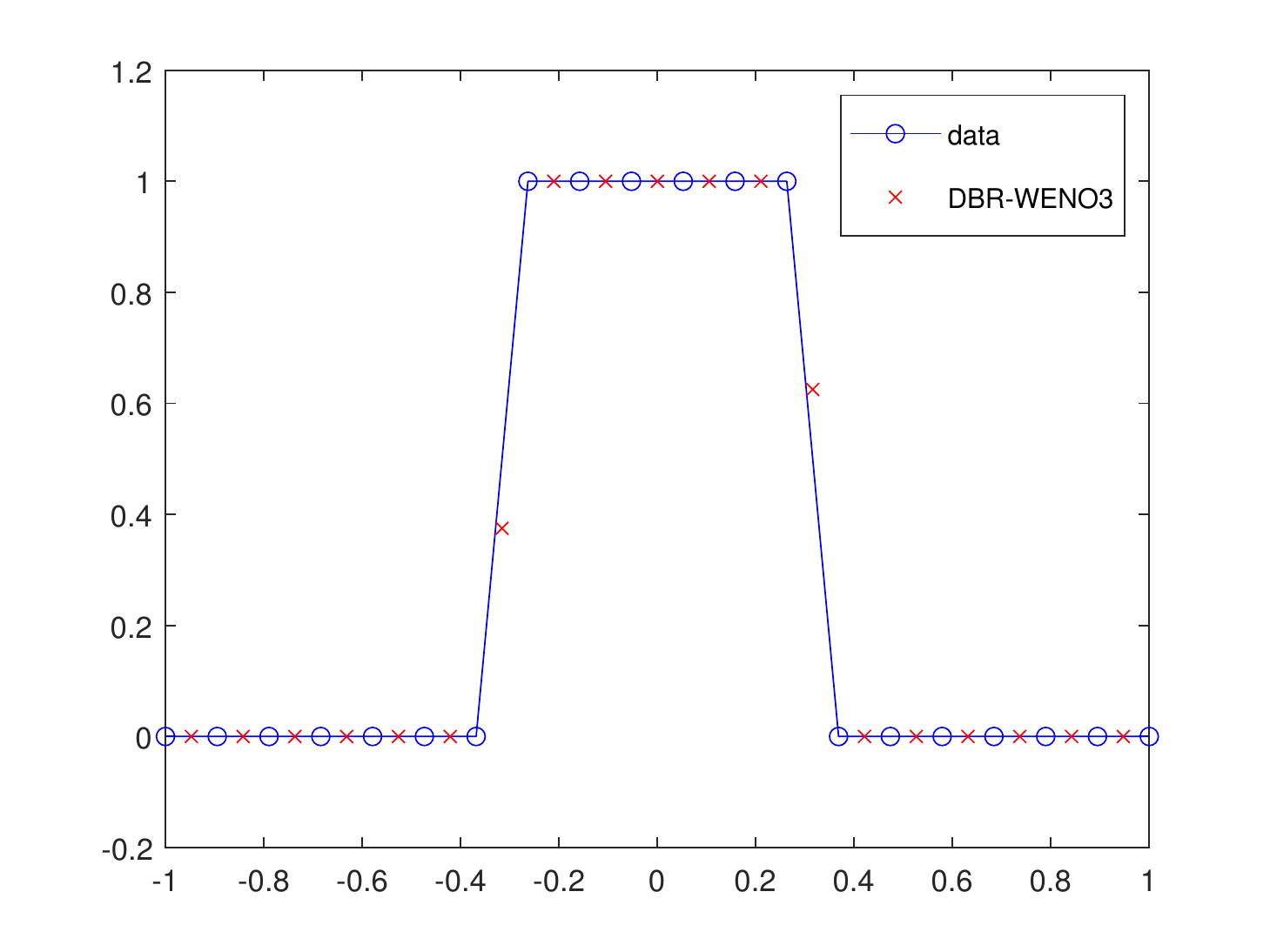}
	\includegraphics[scale=0.55]{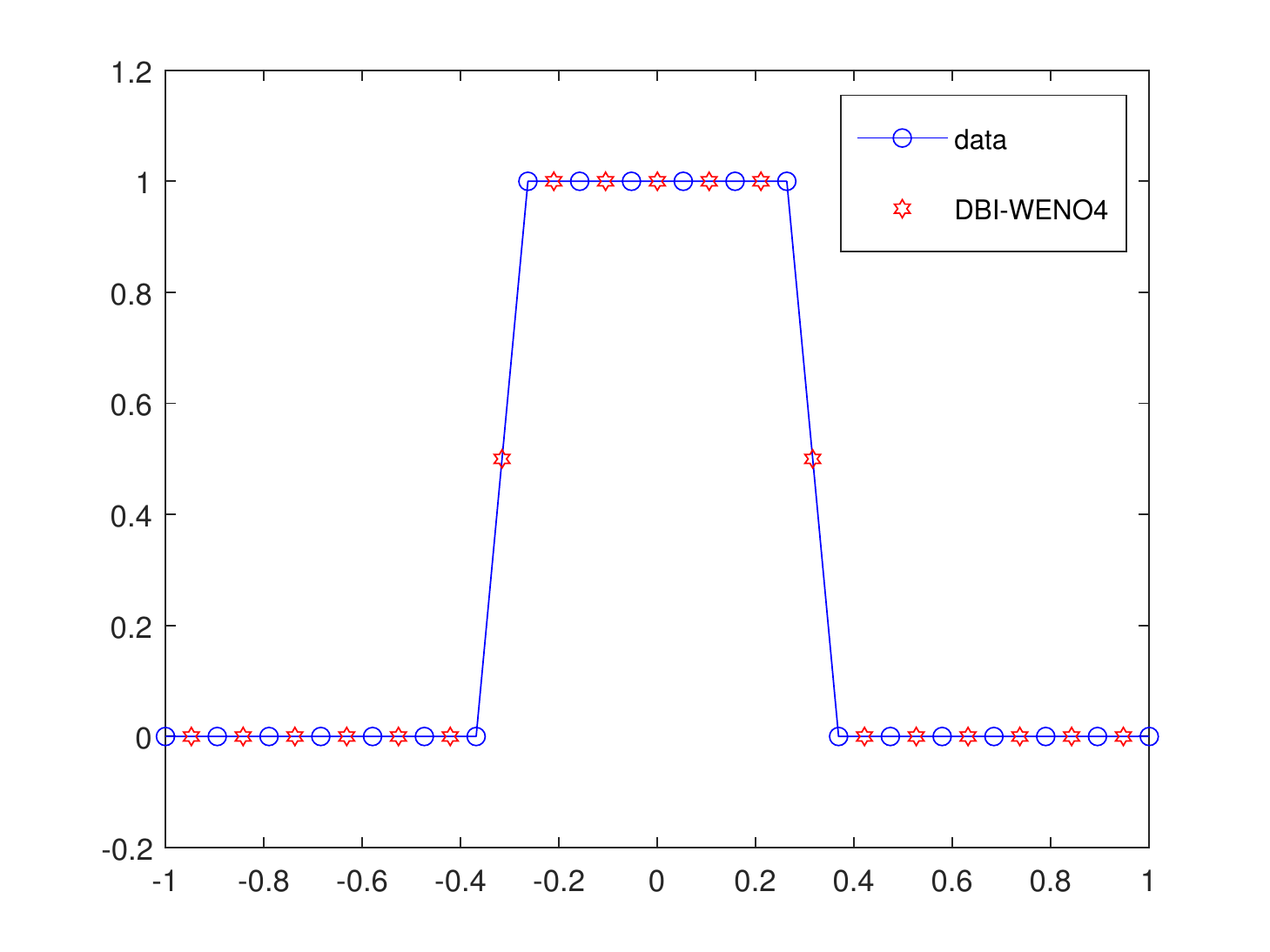}
	\includegraphics[scale=0.55]{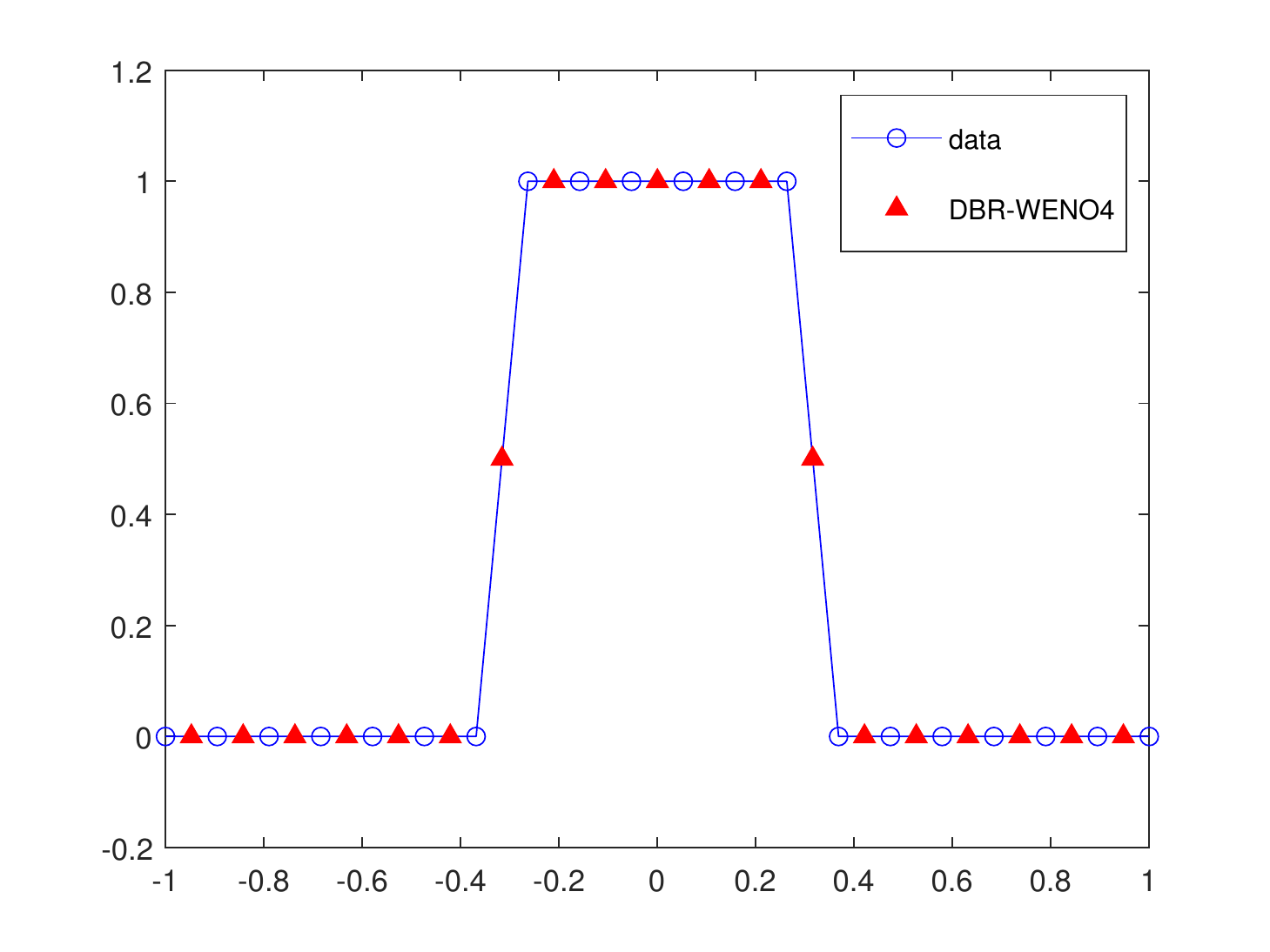}
%		\textbf{a}&\textbf{b}
%	\end{tabular}
\caption{Data-boundedness of third and fourth order DB-WENO approximations for discontinuous function \eqref{Ex3} with 20 grid points.}	
\label{DBApprox-discont}	
\end{figure}

\section{Application of DB-WENO in WENO3 scheme of HCL}\label{sec-4}
The semi-discretized conservative scheme for the one-dimensional hyperbolic conservation laws $u_t+f(u)_x=0$ is as follows
\begin{gather}\label{semi-disc1}
\frac{du_i(t)}{dt}=-\frac{1}{\Delta x}\left(\hat{f}_{i+\frac{1}{2}}-\hat{f}_{i-\frac{1}{2}}\right)
\end{gather}
In the third-order WENO (WENO3) scheme \cite{shu1998essentially} the numerical flux $\hat{f}_{i+\frac{1}{2}}$ is of the form 
\begin{equation}\label{weno3-scheme}
\hat{f}_{i+\frac{1}{2}}=\sum_{l=0}^{1}\omega_l \hat{f}^{l}_{i+\frac{1}{2}},
\end{equation}
where non-linear weights $\omega_l$ satisfy the following convexity property
\begin{equation}
\sum_{l=0}^1 \omega_l=1,\; \omega_l\geq 0. 
\end{equation}
and the expression of $\hat{f}^{l}_{i+\frac{1}{2}}$ are as follows
\begin{equation}\label{weno3}
\begin{aligned}
	\hat{f}^{0}_{i+\frac{1}{2}}=\frac{3}{2}f_i-\frac{1}{2}f_{i-1}\\
	\hat{f}^{1}_{i+\frac{1}{2}}=\frac{1}{2}f_i+\frac{1}{2}f_{i+1}.
\end{aligned}
\end{equation}
Here the point values of the physical flux of hyperbolic conservation laws i.e., $f(u_i)=f_i$ are considered as the given data values and for maintaining the accuracy of WENO3 schemes, the ideal or linear weights are $d_0=\frac{1}{3},~~d_1=\frac{2}{3}.$
\subsection{DB-weights for WENO3 scheme}\label{subsec-4.1}
In authors previous work \cite{parvin2021new} the following characterization is given for non-oscillatory weights for third order WENO schemes:
\begin{itemize}
\item [i.]Data-bound conditions: $\omega_0$ must be inside the data bounded region Figure \ref{fig:RDBregion}-b.
\item [ii.]Non-oscillatory conditions:
\begin{itemize}
	\item[(a)] for $r_i^{+}=0$,~$\omega_0=1$ and $\omega_1=0$ (Upwind only flux).
	\item[(b)] for $r_i^{+}$ away from $[-1,3]$,~$\omega_0=0$ and $\omega_1=1$ (Centered only flux).	
\end{itemize} 
\item [iii.] Third order Accuracy Condition: for smooth region of solution (which corresponds to $r_i^{+}\approx 1$) ideal(linear) weights should be choosen as $\omega_0=\frac{1}{3}, \; \omega_1=\frac{2}{3}.$ \\
\end{itemize}
Therefore, the use of data bounded weight $\omega_0=\beta_0$ of  \eqref{weightsR} in the numerical flux \eqref{weno3-scheme} results into oscillatory WENO3 scheme. Some DB-weights are defined in \cite{parvin2021new} which lie inside data-bound region Figure \ref{fig:DBweno3}, in order to make WENO3 scheme data-bounded.
\begin{subequations}\label{DBweno3}
\begin{equation}\label{DBweno3_1}
	\omega^1_0=\frac{1}{3}+\frac{2}{3}\left(1-\frac{3|r_i|}{2|r_i|+1}\right)
\end{equation}
\begin{equation}\label{DBweno3_2}
	\omega^2_0=\frac{1}{3}+\frac{2}{3}\left(1-\min\left(\frac{2|r_i|}{1+|r_i|}, \frac{3}{2}\right)\right)
\end{equation}
\begin{equation}\label{DBweno3_3}
	\omega^{k}_{0,5}= \frac{1}{3}+\frac{2}{3}\left(1-\displaystyle \min\left(k|r_i|,\max\left(1, \frac{3|r_i|}{2|r_i|+k}\right)\right)\right),~1.5\le k\le 2
\end{equation}	
\end{subequations}
\begin{figure}[htb!] 
\begin{tabular}{cc}
	\hspace{-1 cm}
	\includegraphics[scale=0.5]{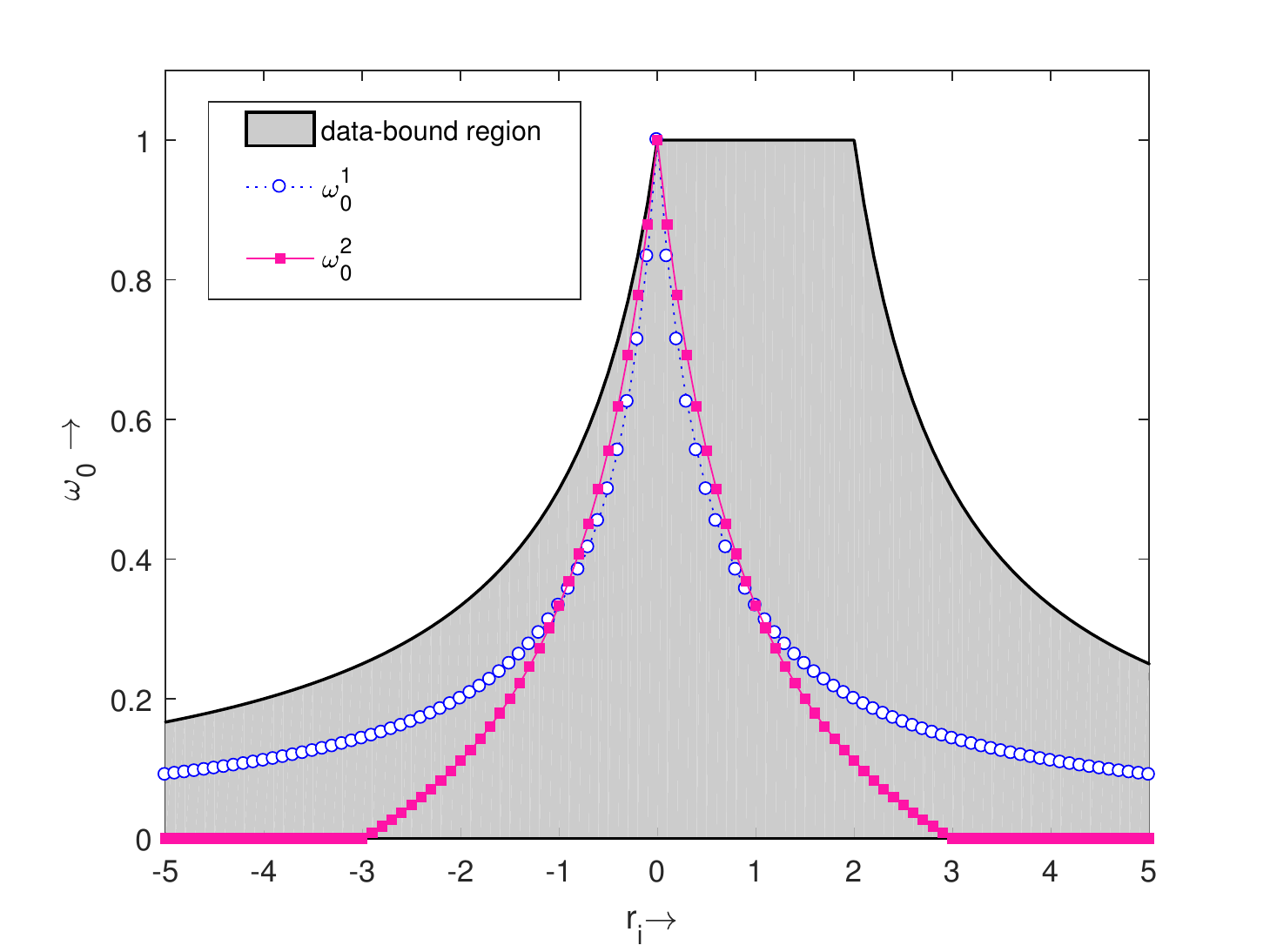} &
	\includegraphics[scale=0.5]{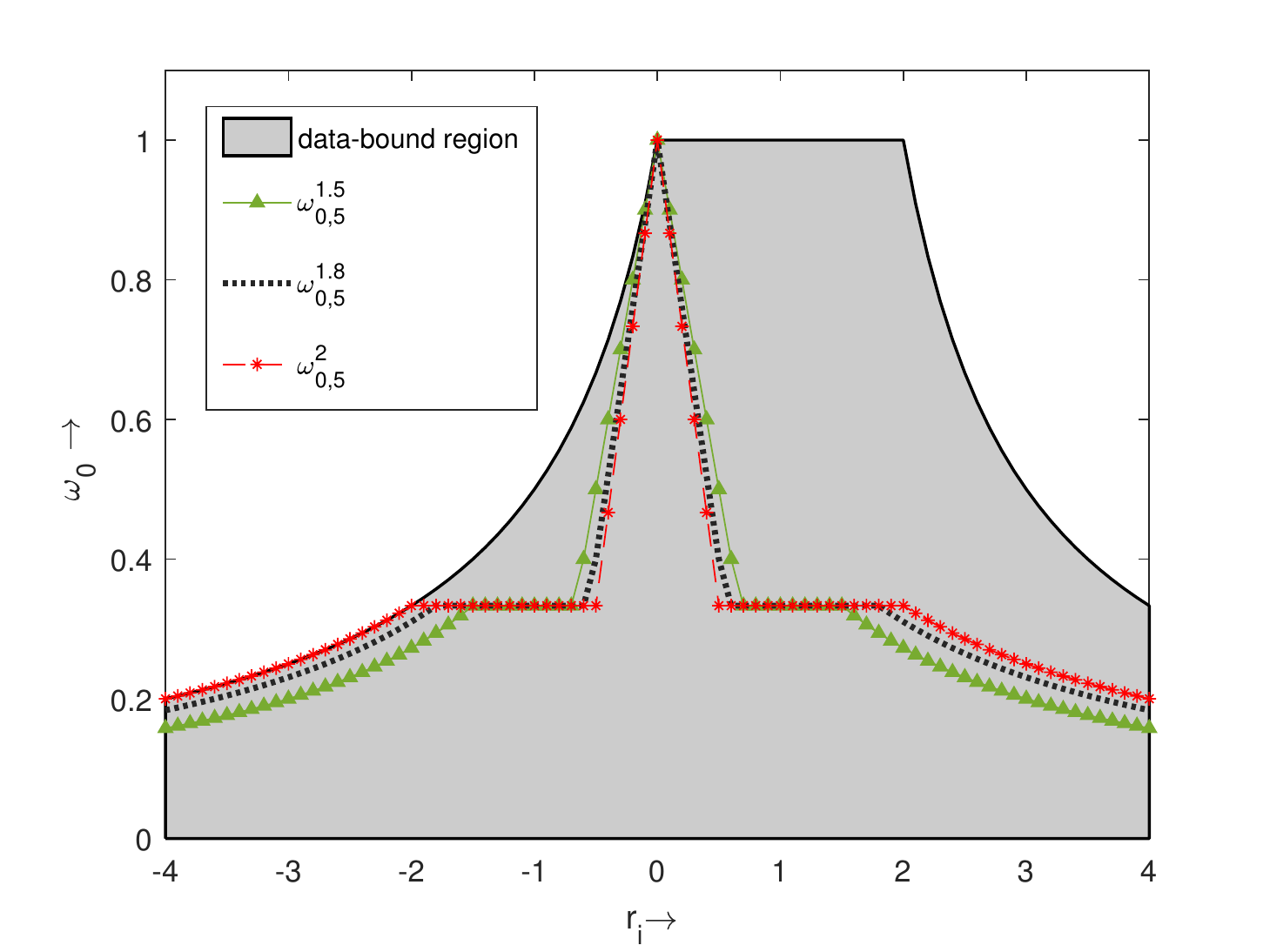} \\
	\textbf{(a)}&\textbf{(b)}
\end{tabular}
\caption{Inside the data-bounded region (a) weights \eqref{DBweno3_1},\eqref{DBweno3_2}~(b)~weights \eqref{DBweno3_3} with different $k$.}
\label{fig:DBweno3}	
\end{figure}
In Figure \ref{fig:DBweno3}, it can be seen that all these weights $\omega^1_0,\omega^2_0,\omega^{k}_{0,5}$ lies inside the data-bound region of third order  WENO approximation and as a result they ensure the data-boundedness of the third order WENO3 scheme. We refer interested to our previous work \cite{parvin2021new} for  numerical results of WENO3 scheme with the proposed weights $\omega^1_0,\omega^2_0,\omega^{k}_{0,5}$.
\section{Other Applications of DB-WENO}\label{sec-5}
\begin{itemize}
	\item The data-bounded WENO approximations can be applied for several real-life PDE based problems which contains both strong discontinuities and complex smooth solution features. For example, DB-WENO approximations can be applied to the the Convection dominated \cite{shu2009high} and Hamilton-Jacobi problems \cite{osher1991high} to develop data-bounded schemes for those problems.  
	\item It is important to note that WENO method is actually an approximation procedure, not directly related to PDEs, hence the DB-WENO procedure can also be used in many non-PDE applications, including computer vision and image processing.  	
\end{itemize}
\section{Conclusion and Future work}\label{sec-6}
Conditions on non-linear weights for constructing data-bounded polynomial approximation are obtained. Based on these bounds, weights are constructed to ensure the data-boundedness of third and fourth order WENO approximations (interpolation and reconstructions) for smooth solution data. Numerical results to show the accuracy and boundedness of the WENO approximations are also given. The fourth order data-bounded WENO approximation which is described in the section \ref{sec-3}, can be used to make data-bounded fourth order WENO (WENO4) scheme, but for that detailed charaterization of the structure of WENO weights is required. This work is under investigation and can be reported separately in future.

{\bf Acknowledgment:} Authors acknowledge the Science and Engineering Board, New Delhi, India for providing necessary financial support through funded projects File No. EMR/2016/000394.
%\pagebreak
%\bibliographystyle{unsrt}
%\bibliography{ref}

\textbf{Appendix A: Other Non-linear DB-weights }
\begin{itemize}
	\item[$\bullet$] Some proposed non-linear DB-weights for the region \eqref{U.B-K} in Corrolary \ref{corrolary1}.
	\begin{itemize}
		\item[i.)] 
		\begin{equation}\label{weightsR1}
		\beta_0^{\eta}=\eta K
		\end{equation}
		where $0\le \eta \le 1$ and $K$ is defined in equation \eqref{U.B-K}.
		\item [ii.)] 
		\begin{equation}\label{weightsR3}
		\beta_0^{2}=\left\{\begin{array}{cc} 1/4 & if~r_{i}^{+}\in [-3,5], \\
		\frac{8}{3{r_i^{+}}^2+5} &  if~r_{i}^{+}<-3,\\
		\frac{5}{3{r_i^{+}}^2-55}&  if~r_{i}^{+}>5.\end{array}\right.
		\end{equation}
	\end{itemize}
	
	\item[$\bullet$] Some proposed non-linear DB-weights for the region \eqref{L.B-J} in Corrolary \ref{corrolary2}.
	\begin{itemize}
		\item[i.)] 
		\begin{equation}\label{weightsL1}
		\mu_0^{\eta}=1-\eta (1-J)
		\end{equation}
		where $0\le \eta \le 1$ and $J$ is defined in equation \eqref{L.B-J}.
		\item [ii.)] 
		\begin{equation}\label{weightsL3}
		\mu_0^{2}=\left\{\begin{array}{cc} 3/4 & if~r_{i}^{-}\in [-3,5], \\
		\frac{3({r_i^{-}}^2-1)}{3{r_i^{-}}^2+5} &  if~r_{i}^{-}<-3,\\
		\frac{3({r_i^{-}}^2-20)}{3{r_i^{-}}^2-55}&  if~r_{i}^{-}>5.\end{array}\right.
		\end{equation}
	\end{itemize}
	
	Note that by the convexity property of weights, the other non-linear weights $\beta_1^{\eta,2}=1-\beta_0^{\eta,2},\&~\mu_1^{\eta,2}=1-\mu_0^{\eta,2}.$ 
	From Figure \ref{fig:DBweights} it can be seen that different non-linear DB-weights defined by the above equations are lies inside the data-bound region.
\end{itemize}
\begin{figure}[htb!] 
	\begin{tabular}{cc}
		% 		\hspace{-1.2cm}
		\includegraphics[scale=0.55]{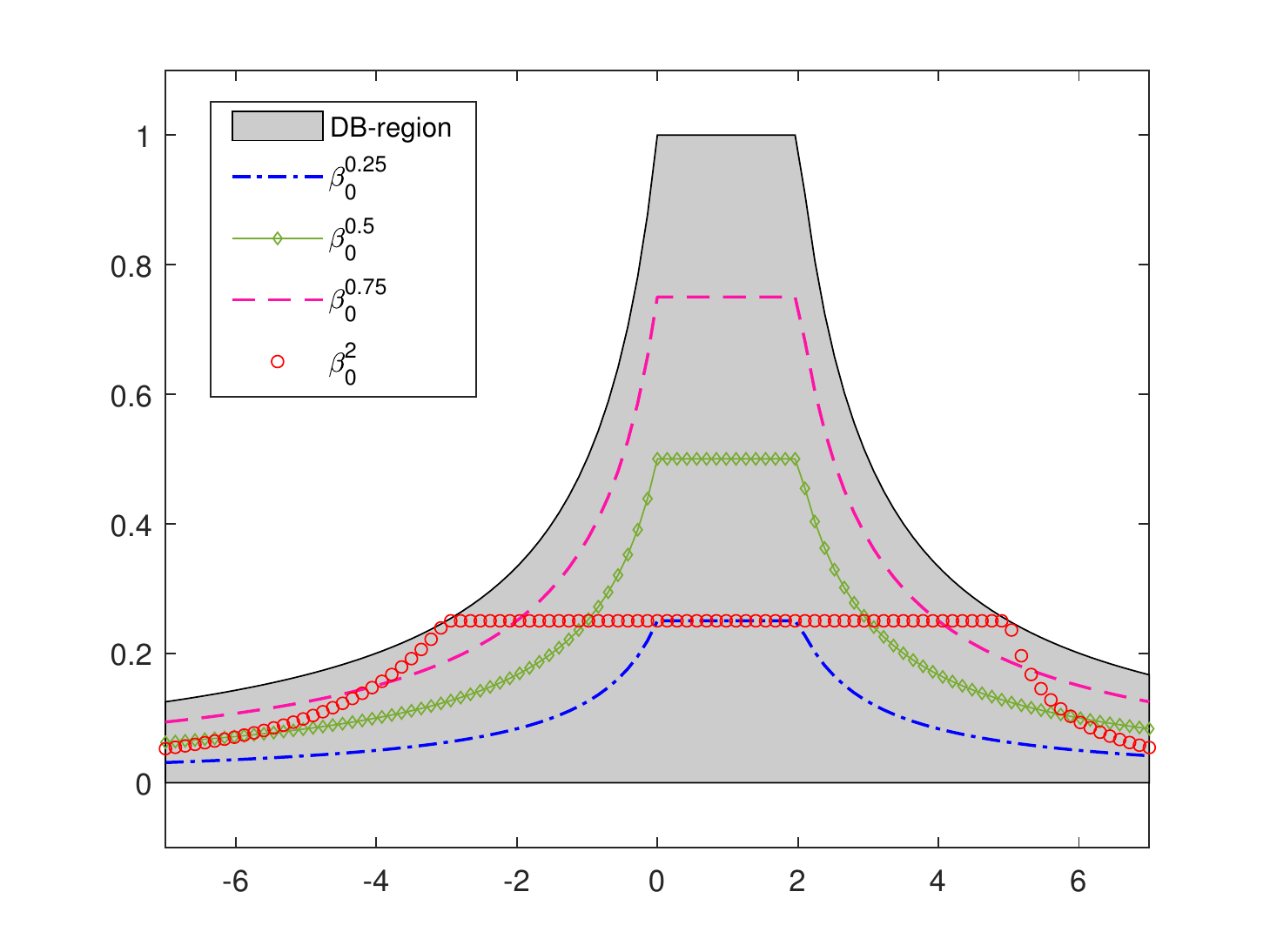} &
		\includegraphics[scale=0.55]{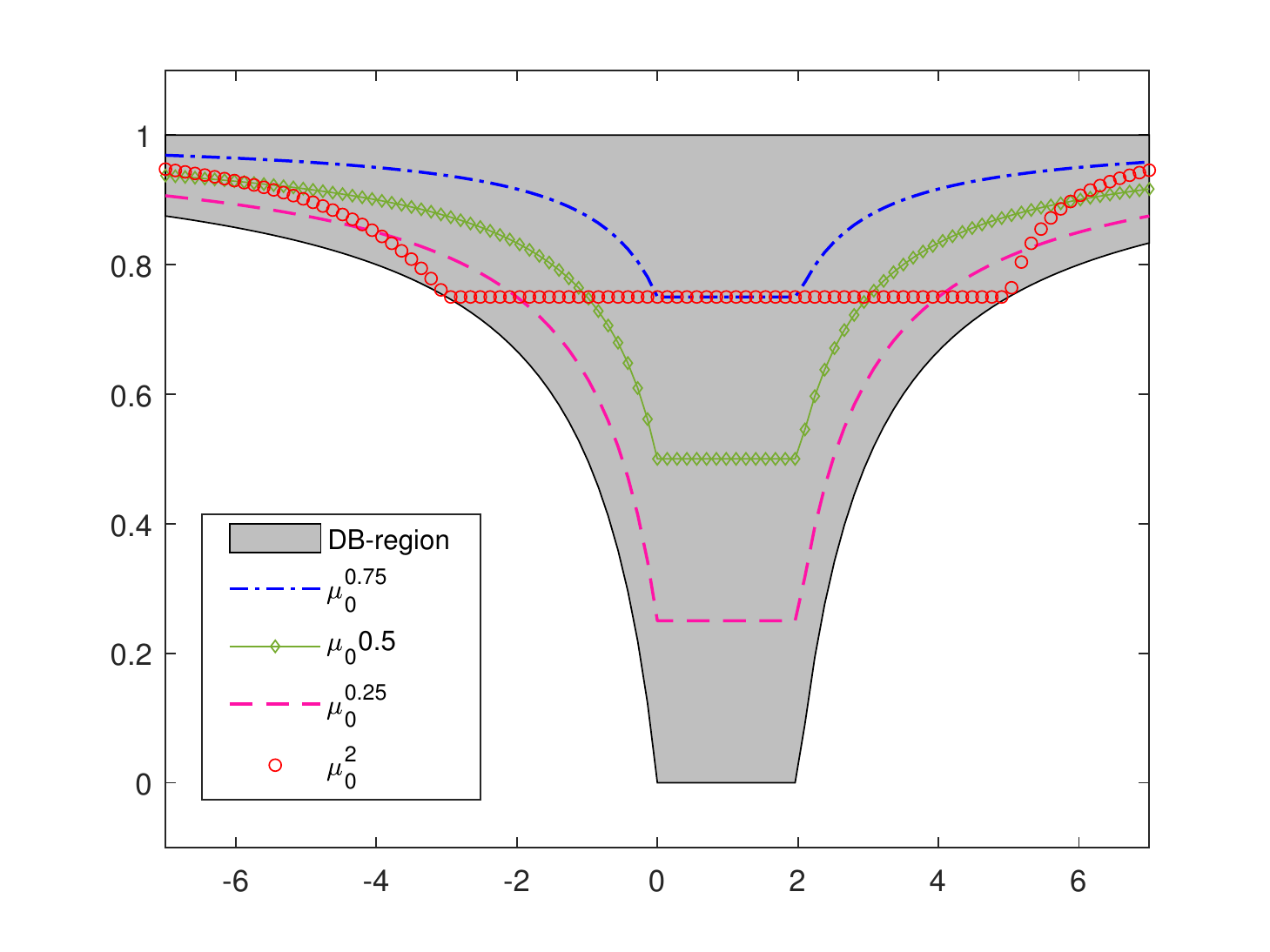} \\
		\textbf{a}&\textbf{b}
	\end{tabular}
	\caption{The proposed DB-weights inside data-bound region }	
	\label{fig:DBweights}	
\end{figure}

\end{document}